\newtheorem{theorem}{Theorem}[section]
\newtheorem{corollary}[theorem]{Corollary}
\newtheorem{proposition}[theorem]{Proposition}
\theoremstyle{definition}
\newtheorem{definition}[theorem]{Definition}
\newtheorem{example}[theorem]{Example}
\newtheorem{examples}[theorem]{Examples}
\newtheorem{remark}[theorem]{Remark}
\DeclareMathOperator{\Id}{Id}
\DeclareMathOperator{\Spec}{Spec}
\DeclareMathOperator{\diam}{diam}
\DeclareMathOperator{\IG}{IG}
\numberwithin{equation}{section}
\begin{document}
	
	\title{Algebraic properties of Indigenous semirings}
	
	\author[H. Behzadipour]{Hussein Behzadipour\orcidlink{0000-0001-7037-4110}}
	
	\address{Hussein Behzadipour\\
		Department of Mathematics \\
		Sharif University of Technology\\
		Tehran\\
		Iran}
	\email{hussein.behzadipour@gmail.com}
	
	\author[H. Koppelaar]{Henk Koppelaar\orcidlink{0000-0001-7487-6564}}
	
	\address{Henk Koppelaar\\
		Faculty of Electrical Engineering, Mathematics and Computer Science\\
		Delft University of Technology\\ Delft\\ The Netherlands}
	
	\email{koppelaar.henk@gmail.com}
	
	\author[P. Nasehpour]{Peyman Nasehpour\orcidlink{0000-0001-6625-364X}}
	
	\address{Peyman Nasehpour\\
		Education Department\\ The New York Academy of Sciences \\ New York, NY, USA}
	
	\email{nasehpour@gmail.com}
	
	\subjclass[2010]{16Y60, 13A15, 01A07.}
	
	\keywords{Indigenous semirings, Information algebras, Graph invariants}
	
	\begin{abstract}
	In this paper, we introduce Indigenous semirings and show that they are examples of information algebras. We also attribute a graph to them and discuss their diameters, girths, and clique numbers. On the other hand, we prove that the Zariski topology of any Indigenous semiring is the Sierpi\'{n}ski space. Next, we investigate their algebraic properties (including ideal theory). In the last section, we characterize units and idempotent elements of formal power series over Indigenous semirings.
	\end{abstract}

	\maketitle
	
	\section{Introduction}
	
	Inspired by the Indigenous number systems (cf. \cite{BenderBeller2021}, \cite{Everett2005}, \cite{Gordon2004}, \S1 in \cite{Ifrah2000}, and \cite{Vandendriessche2022}) and the Indigenous presemiring $\mathcal{M} = \{1,2,3,m\}$ proposed by Sibley in Example 5 on p. 419 of his educational book on abstract algebra \cite{Sibley2021}, we give a general definition for Indigenous semirings and investigate their algebraic properties. It turns out that the Indigenous semirings belong to a particular family of semirings called information algebras which have applications in various fields of science and engineering and have attracted the interests of some authors since 1972 (see Remark \ref{Informationalgebrasrem}).
	
	 From an algebraic perspective, one could argue that a number is not its physical appearance or digit representation. A number is an abstract mathematical object, whereas its appearance is a sequence of symbols on a paper (or a sequence of bits in computer memory, or a sequence of sounds if read aloud). We never see a number itself, but always its representation. So, we become accustomed to identifying a number by its representation. In Ethnomathematics (one of the subjects of this paper) it is similar. This confusing identification exposes one of the difficulties of the field. Other difficulties are exemplified in Examples \ref{historicalexamplesIndigenous} of this paper. A plea to alleviate the study problems of indigenous number systems is to develop cultural tools for numerical cognition \cite{BenderBeller2018}. For instance, the Yapese indigenous money counting is based on stone disks called ``rai stones''. A typical ``rai stone'' is carved out of crystalline limestone and shaped like a disk with a hole in the center. The smallest may be 3.5 centimeters in diameter while the largest extant stone is 3.6 meters in diameter and 50 centimeters thick, and weighs 4,000 kilograms \cite{Gillilland1975}. 
	 
	 This paper develops the underlying algebra (presemiring) of indigenous number systems to an unprecedented degree. Since the language of semiring-like algebraic structures is not standardized yet, we first introduce some terminology. Recall that a bimagma $(R,+,\cdot)$ is a ringoid \cite[p. 206]{Rosenfeld1968} if the binary operation ``$\cdot$'' (multiplication) distributes on the binary operation ``$+$'' (addition) from both sides. A ringoid $(R,+,\cdot)$ is a presemiring if $(R,+)$ is a commutative semigroup and $(R,\cdot)$ a semigroup \cite[Definition 4.2.1]{GondranMinoux2008}. A presemiring is commutative if $(R,\cdot)$ is a commutative semigroup. In this paper, all presemirings are supposed to be commutative. A presemiring $S$ is a semiring if it has a neutral element $0$ for its addition which is also an absorbing element for its multiplication and it has also a neutral element $1 \neq 0$ for its multiplication \cite[p. 1]{Golan1999(b)}. A semiring $S$ is entire if it is zero-divisor free, i.e., $ab = 0$ implies $a = 0$ or $b = 0$, for all $a$ and $b$ in $S$. A semiring $S$ is zerosumfree if $a+b = 0$ implies $a = b = 0$, for all $a$ and $b$ in $S$. A semiring $S$ is an information algebra if it is both zero-divisor free and zerosumfree \cite[p. 4]{Golan1999(b)}. A nonempty set $I$ of a semiring $S$ is an ideal of $S$ if $(I,+)$ is a submonoid of $(S,+)$ and $SI \subseteq I$ \cite{Bourne1951}. We collect all ideals of a semiring $S$ in $\Id(S)$. An ideal $M$ of a semiring $S$ is maximal if there is no ideal properly between $M$ and $S$. A semiring $S$ is local if it has a unique maximal ideal. An ideal $P$ of a semiring $S$ is prime if $P \neq S$ and $IJ \subseteq S$ implies $I \subseteq S$ or $J \subseteq S$, for all ideals $I$ and $J$ of $S$. All prime ideals of a semiring $S$ are collected in $\Spec(S)$. In a (commutative) semiring $S$, an ideal $P \neq S$ is prime if and only if $ab \in P$ implies either $a \in P$ or $b \in P$, for all elements $a$ and $b$ in $S$ \cite[Corollary 7.6]{Golan1999(b)}. 
	
	The first section of the paper is devoted to some results for entire semirings. Recall that for each ideal $I$ of a semiring $S$ \[V(I) = \{P \in \Spec(S) : P \supseteq I\}.\] It is, then, easy to verify that $\mathcal{C} = \{V(I) : I \in \Id(S)\}$ is the family of closed sets for a topology on $X = \Spec(S)$, called the Zariski topology \cite[p. 89]{Golan1999(b)}. A topological space with exactly two points and three closed subsets is called the Sierpi\'{n}ski space (see \cite[p. 17]{ArensDugundji1951} and \cite[Exercise 1.7]{Rotman1988}). In Theorem \ref{Zariskitopologyentiresemiringwithtwoprimes}, we show that the Zariski topology of an entire semiring with exactly one nonzero prime ideal is the Sierpi\'{n}ski space. Then, we proceed to show that the localization of an information algebra is an information algebra (see Proposition \ref{Localizationofinformationalgebras}).
	
	Let us recall that if $S$ is a semiring, then $\Id(S)$ equipped with addition and multiplication of ideals is a semiring \cite[Proposition 6.29]{Golan1999(b)}. In Theorem \ref{Semiringidealsinformationalgebra}, we prove that $\Id(S)$ is an information algebra if and only if $S$ is an entire semiring.
	
    Inspired by Example 5 on p. 419 in \cite{Sibley2021}, in \S\ref{sec:indigenousgraphs} of our paper, we define Indigenous addition and multiplication on $I_k = \mathbb{N}_k \cup \{m\}$, where $\mathbb{N}_k$ is the set of positive integer numbers less than $k+1$ and $m$ is just a symbol standing for ``many'' (see Definition \ref{Indigenouspresemiringdef}), and next in Proposition \ref{Indigenoussemiringpro}, we show that the bimagma $(I_k, \oplus, \odot)$ is a unital presemiring. For some historical examples of Indigenous presemirings, check Examples \ref{historicalexamplesIndigenous}.

    In Definition \ref{Indigenousgraphs}, we attribute a graph $\IG_k$ to any Indigenous presemiring $I_k$ where its vertices' set is $I_k$ and $\{a,b\}$ is an edge of $\IG_k$ if $a \neq b$ are elements of $I_k$ with $ab = m$. In the rest of \S\ref{sec:indigenousgraphs}, we discuss some graph invariants of Indigenous graphs. For example in Theorem \ref{diamIG}, we prove that the Indigenous graph $\IG_k$ is a connected graph with $\diam(\IG_1) = 1$ and $\diam(\IG_k) = 2$ if $k > 1$. In Theorem \ref{girthIG}, we show that the girth of the Indigenous graph $\IG_k$ is $3$ if $k \geq 3$, and is infinity, otherwise. Finally in Theorem \ref{cliqueIG}, we prove that the clique number of the Indigenous graph $\IG_k$ is at least $\left\lfloor \frac{k}{2} \right\rfloor + 1$, for any positive integer $k$.

    By annexing $0$ to the Indigenous presemiring $I_k$, we define the Indigenous semiring $S_k$ and in Theorem \ref{GeneralpropertiesofIndigenoussemirings}, we discuss algebraic properties of the Indigenous semirings. In this theorem with 12 items, we show that, for instance, any Indigenous semiring $S_k$ is a local information algebra and $\{0\}$ and $S_k \setminus \{1\}$ are the only prime ideals. A corollary to this statement is that the Zariski topology of any Indigenous semiring $S_k$ is the Sierpi\'{n}ski space. Next, we verify that $S_k$ is not a semidomain (recall that a semiring $S$ is a semidomain \cite{Nasehpour2019} if $S\setminus\{0\}$ is a multiplicatively cancellative monoid). In the same result, we show that any Indigenous semiring is austere and discuss the algebra of the ideals of the Indigenous semirings. In fact, we show that $(\Id(S_k)\setminus\{\mathbf{0}\}, \cdot)$ is a nilpotent monoid with the absorbing element $\mathfrak{s}_k = \{0,m\}$, where by $\mathbf{0}$, we mean the zero ideal $\{0\}$ of $S_k$.

    In \S\ref{sec:distinguishedelements}, we characterize units and idempotent elements of polynomials and formal power series over the Indigenous semirings (see Proposition \ref{UnitsIndigenoussemirings}, Proposition \ref{IdempotentsIndigenoussemirings1}, and Theorem \ref{IdempotentsIndigenoussemirings2}).
    
    Golan's book \cite{Golan1999(b)} is a general reference for semiring theory, and our terminology closely follows it.
  	
\section{Some results in entire semirings}\label{sec:entiresemirings}
  	
\begin{theorem}\label{Zariskitopologyentiresemiringwithtwoprimes}
The Zariski topology of an entire semiring with exactly one nonzero prime ideal is the Sierpi\'{n}ski space.
\end{theorem}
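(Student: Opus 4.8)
The plan is to pin down $\Spec(S)$ explicitly and then enumerate the closed sets by hand. First I would observe that in any entire semiring $S$ the zero ideal $\{0\}$ is prime: it is an ideal because $0$ is absorbing for multiplication, it is proper because $1 \neq 0$, and if $ab \in \{0\}$, i.e. $ab = 0$, then entireness forces $a = 0$ or $b = 0$, so the element-wise criterion for primeness recalled in the introduction applies. Writing $P$ for the unique nonzero prime ideal provided by the hypothesis, we therefore get $\Spec(S) = \{\{0\}, P\}$, a two-point set. Note also that $\{0\} \subsetneq P$, since $0 \in P$ (every ideal contains the additive neutral element) while $P \neq \{0\}$ by assumption.

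Next I would compute the closed sets $V(I)$, $I \in \Id(S)$. Three of them are immediate: $V(S) = \emptyset$ since prime ideals are proper; $V(\{0\}) = \Spec(S)$ since every ideal contains $\{0\}$; and $V(P) = \{P\}$, because $P \supseteq P$ whereas $\{0\} \not\supseteq P$ (as $P \neq \{0\}$). Since $\Spec(S)$ has only two points, the sole subset not yet accounted for is the singleton $\{\{0\}\}$, and I would rule it out as follows: if $V(I) = \{\{0\}\}$ for some ideal $I$, then $\{0\} \supseteq I$ forces $I = \{0\}$, but then $P \supseteq \{0\} = I$ would put $P \in V(I)$, a contradiction. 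Hence the closed sets are exactly $\emptyset$, $\{P\}$, and $\Spec(S)$ — three in number — and the underlying space has exactly two points, which is precisely the definition of the Sierpi\'{n}ski space recalled above.

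There is no serious obstacle here; the only point demanding a little care is the exhaustive enumeration of closed subsets, i.e. confirming that the singleton $\{\{0\}\}$ is not closed, since a priori $\Id(S)$ could be large and one must be certain that no ideal produces that set. The argument above settles this uniformly. One could alternatively run the whole proof on the open-set side, using the basic open sets $\Spec(S) \setminus V((f)) = \{Q \in \Spec(S) : f \notin Q\}$, but the closed-set bookkeeping is the cleanest route and makes the comparison with the three-closed-subset definition of the Sierpi\'{n}ski space transparent.
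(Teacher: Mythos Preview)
Your proof is correct and follows essentially the same approach as the paper: identify $\Spec(S)=\{\{0\},P\}$ and enumerate the closed sets to match the Sierpi\'{n}ski definition. You are somewhat more explicit than the paper (spelling out why $\{0\}$ is prime and why $\{\{0\}\}$ cannot be closed), whereas the paper instead asserts directly that $V(I)=\{\mathfrak m\}$ for every nonzero proper $I$; but the underlying argument is the same.
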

  	
\begin{proof}
Let $S$ be an entire semiring with exactly one nonzero prime ideal. This means that $\Spec(S) = \{\{0\},\mathfrak{m}\}$, with $\mathfrak{m} \neq \{0\}$. It follows that the only closed subsets of the Zariski topology of $S$ are 
  		
\begin{itemize}
\item $V(S) = \emptyset$,
\item $V(I) = V(\mathfrak{m}) = \{\mathfrak{m}\}$, for any nonzero proper ideal $I$ of $S$.
\item $V(\{0\}) = \{\{0\}, \mathfrak{m}\}$
\end{itemize}
  		
Therefore, the Zariski topology of $S$ has two points with three closed subsets which is the Sierpi\'{n}ski space. This completes the proof.
\end{proof}

\begin{remark}\label{Informationalgebrasrem}
Due to the applications of information algebras, they configure an important family of semirings. Perhaps the oldest example for information algebras is the semiring of non-negative integer numbers $\mathbb{N}_0$ equipped with usual addition and multiplication of numbers. The term ``information algebra" was introduced by Jean Kuntzmann \cite{Kuntzmann1972}. Traditionally, information algebras had some applications in graph theory \cite{Kuntzmann1972} and the theory of discrete-event dynamical systems \cite[p. 7]{Golan2003}. The other example of information algebras is the min-plus semiring $(\mathbb{R} \cup \{+\infty\}, \oplus, \otimes)$ in which its operations are defined as \[a \oplus b = \min \{a,b\} \text{~and~} a \otimes b = a+b.\] The min-plus semiring has essential applications in the shortest path problem in optimization \cite[Example 1.22]{Golan1999(b)} and is used extensively in tropical geometry \cite[\S1.1]{MaclaganSturmfels2015}. Information algebras have attracted the interests of some authors working recently on factorization problems \cite{BaethGotti2020,BaethSampson2020,ChenZhaoLiu2015}. 
\end{remark}
  
A nonempty subset $U$ of a semiring $S$ is multiplicatively closed if $(U,\cdot)$ is a submonoid of $(S,\cdot)$. The localization of a semiring $S$ at a multiplicatively closed set $U$ of $S$, denoted by $U^{-1}S$, is defined similar to its counterpart in commutative ring theory (for the details see \S5 in \cite{Nasehpour2018S}).
  
  \begin{proposition}\label{Localizationofinformationalgebras}
  	Let $U \subseteq S \setminus \{0\}$ be a multiplicatively closed subset of a semiring $S$. Then, the following statements hold:
  	
  	\begin{enumerate}
  		\item If $S$ is entire, then so is $U^{-1}S$.
  		
  		\item If $S$ is an information algebra, then so is $U^{-1}S$.
  	\end{enumerate}
  \end{proposition}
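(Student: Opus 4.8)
The plan is to work directly from the definition of the localization $U^{-1}S$, whose elements are equivalence classes $a/u$ with $a \in S$, $u \in U$, under the relation $a/u \sim b/v$ iff $wva = wub$ for some $w \in U$, with the usual addition $a/u + b/v = (va+ub)/(uv)$ and multiplication $(a/u)(b/v) = (ab)/(uv)$. A preliminary remark I would make is that, since $0 \notin U$, the zero element of $U^{-1}S$ is $0/1$, and $a/u = 0/1$ holds iff $wa = 0$ for some $w \in U$; in particular if $S$ is entire then $wa = 0$ with $w \neq 0$ forces $a = 0$, so $a/u = 0/1$ iff $a = 0$. This observation is the workhorse for both parts.

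For part (1), suppose $S$ is entire and take $(a/u)(b/v) = 0/1$ in $U^{-1}S$, i.e. $ab/(uv) = 0/1$. By the preliminary remark (applied to the element $ab$), this means $ab = 0$ in $S$, hence $a = 0$ or $b = 0$ by entireness of $S$, and therefore $a/u = 0/1$ or $b/v = 0/1$. Thus $U^{-1}S$ is zero-divisor free. (One should also note $U^{-1}S$ is genuinely a semiring with $1/1 \neq 0/1$, again because $1 = 0$ would force $S$ trivial; this is part of the standard localization machinery from \cite{Nasehpour2018S}, so I would cite it rather than reprove it.)

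For part (2), assume $S$ is an information algebra, so it is entire and zerosumfree. Entireness of $U^{-1}S$ is already handled by part (1), so it remains to check zerosumfreeness. Suppose $a/u + b/v = 0/1$, that is $(va + ub)/(uv) = 0/1$. Again by the preliminary remark, $va + ub = 0$ in $S$; since $S$ is zerosumfree this gives $va = 0$ and $ub = 0$; and since $v, u \in U \subseteq S \setminus \{0\}$ and $S$ is entire, we conclude $a = 0$ and $b = 0$, hence $a/u = 0/1$ and $b/v = 0/1$. Therefore $U^{-1}S$ is zerosumfree, and combined with part (1) it is an information algebra.

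The only real subtlety — and the point I would be most careful about — is the passage from an equality in $U^{-1}S$ back to an equality in $S$: a priori $ab/(uv) = 0/1$ only gives $w \cdot ab = 0$ for \emph{some} $w \in U$, not $ab = 0$ outright. The hypothesis $U \subseteq S \setminus \{0\}$ together with entireness of $S$ is exactly what lets us cancel the stray factor $w$ and land back in $S$; this is why both statements require $0 \notin U$, and it is the step where the proof would break for a non-entire $S$. Everything else is a routine unwinding of the localization definition.
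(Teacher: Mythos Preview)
Your proof is correct and follows the same route as the paper's: both establish the key fact that $a/u = 0/1$ iff $a = 0$ when $S$ is entire, then use it to reduce zero-divisor-freeness and zerosumfreeness of $U^{-1}S$ to the corresponding properties of $S$. If anything, your write-up is more careful than the paper's, since you explicitly address the equivalence-relation witness $w \in U$ that the paper's proof suppresses.
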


\begin{proof}
(1): This is straightforward because $a/u = 0/1$ if and only if $a = 0$, for all $a \in S$ and $u \in U$.

(2): Let $(a/u) + (b/v) = 0/1$. It follows that $va + ub = 0$. Since $S$ is zerosumfree, $va = 0$ and $ub = 0$. Now, since $u$ and $v$ are nonzero and $S$ is entire, $a$ and $b$ are both zero, and so, $a/u = 0/1$ and $b/v = 0/1$. Thus $U^{-1}S$ is an information algebra and the proof is complete.
\end{proof}

\begin{theorem}\label{Semiringidealsinformationalgebra} 
Let $S$ be a semiring. Then, the following statements hold:
  	
  	\begin{enumerate}
  		\item\label{Semiringidealszerosumfree} $\Id(S)$ is a partially ordered zerosumfree and additively idempotent semiring.
  		
  		\item $\Id(S)$ is an information algebra if and only if $S$ is entire.
  	\end{enumerate}	
  	
  \end{theorem}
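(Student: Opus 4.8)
The plan is to handle the two items separately, since item (1) is purely formal and item (2) is where the real content lies.

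For item (1), I would first recall from \cite[Proposition 6.29]{Golan1999(b)} that $\Id(S)$ is a semiring under the ideal sum $I + J = \{a + b : a \in I,\ b \in J\}$ and the ideal product $IJ$ (the ideal generated by all $ab$ with $a \in I$, $b \in J$), with zero element $\mathbf{0} = \{0\}$ and identity element $S$. Then I would observe that $\Id(S)$ is partially ordered by inclusion, and that inclusion is compatible with both operations. Additive idempotence is immediate: $I + I = I$ because $I$ is closed under addition and contains $0$. Zerosumfreeness follows directly from additive idempotence together with the fact that $I + J \supseteq I \cup J$, so if $I + J = \mathbf{0}$ then $I \subseteq \mathbf{0}$ and $J \subseteq \mathbf{0}$, forcing $I = J = \mathbf{0}$. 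None of this should present any obstacle.

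For item (2), since $\Id(S)$ is already zerosumfree by item (1), being an information algebra is equivalent to being zero-divisor free, i.e., $IJ = \mathbf{0}$ implies $I = \mathbf{0}$ or $J = \mathbf{0}$. For the "if" direction, suppose $S$ is entire and $IJ = \mathbf{0}$. Then $ab = 0$ for all $a \in I$ and $b \in J$; if $I \neq \mathbf{0}$, pick a nonzero $a \in I$, and then $ab = 0$ with $a \neq 0$ forces $b = 0$ for every $b \in J$, so $J = \mathbf{0}$. For the "only if" direction, I would argue by contrapositive: if $S$ is not entire, there exist nonzero $a, b \in S$ with $ab = 0$; then the principal ideals $Sa$ and $Sb$ (or rather the ideals generated by $a$ and $b$) are nonzero, but their product is generated by products of their elements, which I claim lie in a zero set. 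The delicate point here — and the main thing to check carefully — is that the product ideal $(a)(b)$ really is $\mathbf{0}$: elements of $(a)$ have the form $r a + n a$ for $r \in S$ (accounting for the fact that semiring ideals need the additive-monoid closure, not just $Sa$), and similarly for $(b)$, so a typical product is a sum of terms of the shape $(r a)(s b) = rs(ab) = 0$; hence every element of $(a)(b)$ is a finite sum of zeros, i.e., $0$. This confirms $(a)(b) = \mathbf{0}$ with both factors nonzero, so $\Id(S)$ has zero-divisors, completing the contrapositive.

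The only genuine subtlety throughout is bookkeeping with the precise definition of ideals in semirings (closure under addition and under multiplication by arbitrary elements of $S$, but no subtraction), which slightly complicates the description of principal ideals compared to the ring case; once that is pinned down, both directions of item (2) are short. I would write the contrapositive direction using the principal ideals generated by the witnesses $a$ and $b$ to a failure of entirety, and state explicitly that $(a)(b) = \mathbf{0}$ by the computation above.
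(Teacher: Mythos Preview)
Your proposal is correct and follows essentially the same route as the paper: item~(1) is handled by citing standard facts (the paper invokes \cite[Proposition 6.29]{Golan1999(b)} and \cite[Proposition 2.3]{Nasehpour2018P} rather than writing out the verifications), and item~(2) is reduced in both cases to the behaviour of principal ideals, via the identity $(a)(b)=(ab)$ in a commutative semiring. The only remark is that your caution about the shape of elements of $(a)$ is unnecessary here: since the paper's semirings have a multiplicative identity, $(a)=Sa$ and the ``$+\,na$'' term is already absorbed, so the computation $(a)(b)=(ab)$ is immediate and your delicate bookkeeping can be dropped.
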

  
  \begin{proof}
  	(\ref{Semiringidealszerosumfree}): By Proposition 6.29 in \cite{Golan1999(b)}, $\Id(S)$ is zerosumfree and additively idempotent. By Proposition 2.3 in \cite{Nasehpour2018P}, $(\Id(S), \subseteq)$ is a partially ordered semiring. 
  	
  	(2): Let $S$ be entire. If $I$ and $J$ are nonzero ideals, then there are nonzero elements $a \in I$ and $b \in J$. So, $ab \in IJ$ is nonzero showing that $\Id(S)$ is entire. Now, by the statement (\ref{Semiringidealszerosumfree}), $\Id(S)$ is an information algebra. Conversely, if $\Id(S)$ is an information algebra and $a$ and $b$ are nonzero elements of $S$, then the principal ideals $(a)$ and $(b)$ are nonzero. It follows that $(a)(b)$ is also a nonzero ideal of $S$. However, $(a)(b) = (ab)$ because $S$ is commutative. Therefore, $ab$ is nonzero, i.e., $S$ is entire and the proof is complete.
  \end{proof}

\section{Indigenous presemirings and their graphs}\label{sec:indigenousgraphs}
	
\begin{definition}
	A bimagma $(R,+,\cdot)$ is a (commutative) presemiring if $(R,+)$ is a commutative semigroup, $(R,\cdot)$ is a (commutative) semigroup, and $\cdot$ distributes on $+$ from both sides (see Definition 4.2.1 in \cite{GondranMinoux2008}). The presemiring $R$ is unital if there is an element $1$ in $R$ such that $r \cdot 1 = 1 \cdot r = r$, for all $r \in R$. 
\end{definition}

\begin{example}
	The bimagma $(\mathbb{N},+,\cdot)$ is a unital presemiring.
\end{example}

Inspired by Example 5 on p. 419 in \cite{Sibley2021}, we give the following definition:
	
	\begin{definition}\label{Indigenouspresemiringdef}
		Let $k$ be a positive integer. Set $I_k = \mathbb{N}_k \cup \{m\}$, where $m$ is just a symbol standing for ``many'' and is not in $\mathbb{N}_k$. Define Indigenous addition and multiplication on $I_k$ as follows: If $a, b \in \mathbb{N}_k$, then \[
	a \oplus b =
	\begin{cases}
		a+b & \text{if $ a+b \leq k$} \\
		m & \text{if $a+b > k$}
	\end{cases}
	\text{, and~} a \odot b =
	\begin{cases}
		ab & \text{if $ab\leq k$} \\
		m & \text{if $ab > k$}
	\end{cases}
	\] and if either $a= m$ or $b = m$, then \[a \oplus b = a \odot b = m.\]
	\end{definition}
	
	\begin{proposition}\label{Indigenoussemiringpro}
	 The bimagma $(I_k, \oplus, \odot)$ defined in Definition \ref{Indigenouspresemiringdef} is a unital presemiring and an epimorphic image of the presemiring $\mathbb{N}$.
	\end{proposition}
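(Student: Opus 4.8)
My plan is to realize $(I_k,\oplus,\odot)$ as a surjective homomorphic image of the presemiring $(\mathbb{N},+,\cdot)$ via the obvious ``truncation'' map, and then to note that the presemiring axioms, being universally quantified identities, are inherited by surjective homomorphic images. So first I would define $\varphi\colon\mathbb{N}\to I_k$ by $\varphi(n)=n$ for $n\leq k$ and $\varphi(n)=m$ for $n>k$; surjectivity is immediate since $\varphi$ restricts to the identity on $\mathbb{N}_k$ and sends $k+1$ to $m$.

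The substantive step is to verify that $\varphi$ respects both operations, i.e.\ $\varphi(a+b)=\varphi(a)\oplus\varphi(b)$ and $\varphi(ab)=\varphi(a)\odot\varphi(b)$ for all $a,b\in\mathbb{N}$. I would do this by a short case analysis against Definition \ref{Indigenouspresemiringdef}. When $a,b\leq k$, the dichotomies $a+b\leq k$ versus $a+b>k$ and $ab\leq k$ versus $ab>k$ match exactly the two branches defining $\oplus$ and $\odot$ on $\mathbb{N}_k$. When $a>k$ (so $\varphi(a)=m$) we have $a+b>k$ and, crucially, $ab\geq a>k$ because every element of $\mathbb{N}$ is at least $1$; hence $\varphi(a+b)=\varphi(ab)=m=m\oplus\varphi(b)=m\odot\varphi(b)$, and the case $b>k$ is symmetric. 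This shows $\varphi$ is a bimagma epimorphism onto $(I_k,\oplus,\odot)$.

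From here the conclusion is quick. Since $(\mathbb{N},+,\cdot)$ is a commutative presemiring and the defining conditions --- associativity and commutativity of addition, associativity and commutativity of multiplication, and two-sided distributivity --- are equational and thus pass to surjective homomorphic images, $(I_k,\oplus,\odot)$ is a commutative presemiring. For unitality I would observe that $k\geq 1$ gives $1\in\mathbb{N}_k\subseteq I_k$ with $\varphi(1)=1$; then for any $x\in I_k$, choosing $n\in\mathbb{N}$ with $\varphi(n)=x$ yields $1\odot x=\varphi(1)\odot\varphi(n)=\varphi(n)=x$ and likewise $x\odot 1=x$, so $1$ is a two-sided multiplicative identity.

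The only place that needs genuine attention is the multiplicative case of the homomorphism check: one must use that a factor exceeding $k$ forces the product to exceed $k$, which relies on $\mathbb{N}$ consisting of \emph{positive} integers. Everything else is bookkeeping.
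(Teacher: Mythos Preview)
Your proof is correct. Both you and the paper use the same truncation map $\varphi$, but the two arguments are organized differently. The paper first verifies the presemiring axioms on $I_k$ directly --- asserting that $(I_k,\oplus)$ and $(I_k,\odot,1)$ are commutative (semi)groups and then checking distributivity by a case split on whether $ab+ac\leq k$ --- and only afterwards introduces $\varphi$ and states that it is a presemiring epimorphism. You reverse the order: you establish first that $\varphi$ is a surjective bimagma homomorphism, and then invoke the general principle that equational identities transfer along surjections to get associativity, commutativity, and distributivity on $I_k$ for free. Your route is a bit more economical, since the single homomorphism verification subsumes the separate distributivity check the paper carries out; the paper's route, on the other hand, makes the structure of $I_k$ explicit before appealing to $\mathbb{N}$. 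Your remark that the multiplicative case needs $b\geq 1$ (so that $a>k$ forces $ab>k$) is exactly the point that makes the homomorphism check go through, and it is the one place where working over $\mathbb{N}$ rather than $\mathbb{N}_0$ matters.
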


\begin{proof}
It is easy to see that $(I_k, \oplus)$ is a commutative semigroup and $(I_k,\odot,1)$ is a commutative monoid. Now, let $a$, $b$, and $c$ be elements of $I_k$. If $ab + ac$ is less than $k+1$, then $ab$ and $ac$ are also less than $k+1$, and so, the distributive laws hold because the computation is done in natural numbers. If at least one of the elements $ab$ and $ac$ are greater than $k$, then their addition is $m$ and we have \[a \odot (b \oplus c) = m = (a \odot b) \oplus (a \odot c).\] This means that $(I_k, \oplus, \odot)$ is a unital presemiring. It is easy to check that the function $f: \mathbb{N} \rightarrow I_k$ defined by \[f(x) = \begin{cases}
	x & \text{if $x \leq k$} \\
	m & \text{if $x>k$}  
\end{cases}.\] is a presemiring epimorphism and the proof is complete.
\end{proof}

\begin{examples}\label{historicalexamplesIndigenous} Let $k$ be a positive integer number. We call the presemiring $I_k$ an Indigenous presemiring of order $k$. In the following, we give some examples mainly discussed in the literature.
\begin{enumerate}
	\item Gordon illustrated that the Pirah\~{a} applied a numerical vocabulary corresponding to the terms ``h\'{o}i'' (for ``one''), ``ho\'{i}'' (for ``two''), and ``baagiso'' (for ``many'') \cite{Gordon2004}. One may correspond this to the Indigenous presemiring of order $2$.  
	
	\item (Sibley's Indigenous presemiring) As we have already explained, we were inspired by an example given on p. 419 in Sibley's book \cite{Sibley2021}. Sibley's Indigenous presemiring is the Indigenous presemiring of order $3$.
	
	\item On p. 5 in his book \cite{Ifrah2000} on the universal history of numbers, Ifrah explains that the Botocudos had only two real terms for numbers: one for ``one'', and the other for ``a pair''. With these lexical items they could manage to express three and four by saying something like ``one and two'' and ``two and two''. However, these people had as much difficulty conceptualizing a number above four. In fact, for larger numbers, some of the Botocudos just pointed to their hair as if they were trying to say there are as ``many'' as there are hairs on their head. This may correspond to the Indigenous presemiring of order $4$.
	
	\item In his academic book \cite{Sommerfelt1938}, Sommerfelt reports that the Aranda had only two number terms, ``ninta'' for one, and ``tara'' for two. Three and four were expressed as ``tara-mi-ninta'' (i.e., two and one) and ``tara-ma-tara'' (i.e., two and two), respectively, and the number series of Aranda stopped there. For larger quantities, imprecise terms resembling ``a lot'', ``several'', and so on were used. This may also correspond to the Indigenous presemiring of order $4$.
\end{enumerate}	
\end{examples}

\begin{definition}\label{Indigenousgraphs} 
We define the Indigenous graph $\IG_k$ as follows:
\begin{enumerate}
	\item The set of the vertices of $\IG_k$ is $I_k = \mathbb{N}_k \cup \{m\}$.
	
	\item The doubleton $\{a,b\}$ is an edge of $\IG_k$ if $a \neq b$ and their multiplication $a \odot b$ equals $m$. 
\end{enumerate} 
\end{definition}

Let us recall that the diameter of a graph $G$, denoted by $\diam(G)$, is the greatest distance between two vertices of $G$ \cite[\S3.1.7]{BondyMurty2008}.

\begin{theorem}\label{diamIG}
For any positive integer $k$, the Indigenous graph $\IG_k$ is a connected graph with $\diam(\IG_1) = 1$ and $\diam(\IG_k) = 2$ if $k > 1$.
\end{theorem}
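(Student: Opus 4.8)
The plan is to exploit the fact that the symbol $m$ is a universal (dominating) vertex of $\IG_k$. First I would observe that for every $a \in I_k$ with $a \neq m$ we have $a \odot m = m$ by Definition \ref{Indigenouspresemiringdef}, so $\{a,m\}$ is an edge of $\IG_k$. Hence every vertex is joined to $m$, which immediately shows that $\IG_k$ is connected and that any two vertices are at distance at most $2$ (go through $m$). Thus $\diam(\IG_k) \leq 2$ for every positive integer $k$, and all that remains is to decide when this bound is attained.

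Next I would dispatch the base case $k = 1$ separately, since there are not enough vertices to do anything else: here $I_1 = \{1,m\}$, the only possible edge is $\{1,m\}$, and $1 \odot m = m$ makes it an edge, so $\IG_1$ is a single edge (the complete graph $K_2$) and $\diam(\IG_1) = 1$.

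For $k > 1$ I would exhibit two vertices at distance exactly $2$. The natural choice is the pair $1$ and $2$, which both lie in $\mathbb{N}_k \subseteq I_k$ because $k \geq 2$, and which are distinct. They are non-adjacent: $1 \odot 2 = 2 \leq k$, so $1 \odot 2 \neq m$, and therefore $\{1,2\}$ is not an edge of $\IG_k$. (More is true: $1 \odot a = a \leq k$ for every $a \in \mathbb{N}_k$, so the vertex $1$ is adjacent only to $m$.) Consequently $d_{\IG_k}(1,2) \geq 2$, and combining this with the universal-vertex bound $d_{\IG_k}(1,2) \leq 2$ gives $d_{\IG_k}(1,2) = 2$, whence $\diam(\IG_k) = 2$.

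I do not expect a genuine obstacle in this argument; the only point requiring care is that the witnessing pair of vertices must actually exist, which is exactly what forces the split at $k = 1$ (where $2 \notin I_1$), together with applying the convention $a \odot m = m$ for the symbol $m$ correctly when checking adjacencies.
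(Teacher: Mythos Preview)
Your proof is correct and follows essentially the same approach as the paper: both arguments use that $m$ is adjacent to every other vertex to obtain connectedness and $\diam(\IG_k)\le 2$, and then exhibit a non-adjacent pair involving the vertex $1$ to force equality when $k>1$. The only cosmetic difference is the choice of witness---you use $\{1,2\}$ while the paper uses $\{1,k\}$---but either pair works for the same reason, namely that $1\odot a = a \ne m$ for all $a\in\mathbb{N}_k$.
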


\begin{proof}
The Indigenous graph $\IG_1$ has only two vertices $1$ and $m$ and they are connected because $1 \odot m = m$. Therefore, $\IG_1$ is the complete graph $K_2$ which is a connected graph with $\diam(\IG_1) = 1$. 

Now, let $k \geq 2$ and $a \neq b$ be arbitrary elements in $\mathbb{N}_k$. Since $a \odot m = b \odot m = m$, $a$ is connected to $m$ and $m$ is connected to $b$. Therefore, $\IG_k$ is a connected graph with $\diam(\IG_k) \leq 2$. However, $1$ and $k$ are not connected. So, $\diam(\IG_k) > 1$. This completes the proof.
\end{proof}

Let us recall that if a graph $G$ has at least one cycle, the length of a shortest cycle is its girth \cite[p. 42]{BondyMurty2008}. If a graph has no cycle, its girth is defined to be infinity. The girth of a graph is usually denoted by $g(G)$.

\begin{theorem}\label{girthIG}
The girth of the Indigenous graph $\IG_k$ is $3$ if $k \geq 3$, and is infinity, otherwise.
\end{theorem}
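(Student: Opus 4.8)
The plan is to handle the two cases $k \geq 3$ and $k \in \{1,2\}$ separately, in each case pinning down exactly which pairs of distinct vertices are adjacent in $\IG_k$ and then searching for (or ruling out) a short cycle.

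First I would treat $k \geq 3$. The strategy here is simply to exhibit a triangle, since no cycle can be shorter than length $3$, so producing one forces $g(\IG_k) = 3$. The natural candidate is the triple $\{1, 2, m\}$: we have $1 \odot m = m$ and $2 \odot m = m$, so $\{1,m\}$ and $\{2,m\}$ are edges regardless of $k$. The only thing left to check is that $\{1,2\}$ is an edge, i.e. that $1 \odot 2 = m$ --- but this fails, since $1 \cdot 2 = 2 \leq k$. So $\{1,2,m\}$ is not a triangle, and I would instead use a triple of the form $\{a, b, m\}$ with $a \neq b$ in $\mathbb{N}_k$ chosen so that $ab > k$; for $k \geq 3$ one can take, say, $a = 2$, $b = k$ (if $k \geq 3$ then $2k > k$ and also $2 \neq k$), giving edges $\{2,m\}$, $\{k,m\}$, and $\{2,k\}$. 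That is a $3$-cycle, hence $g(\IG_k) = 3$. (A small check: for $k = 3$ this is $\{2,3,m\}$ with $2 \odot 3 = m$; fine.)

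Next I would dispatch the small cases. For $k = 1$, by Theorem \ref{diamIG} the graph $\IG_1$ is just $K_2$, which has no cycle, so its girth is infinity. For $k = 2$, the vertex set is $\{1, 2, m\}$; the edges are determined by which distinct pairs multiply to $m$. We have $1 \odot 2 = 2 \leq 2$, so $\{1,2\}$ is not an edge; $1 \odot m = 2 \odot m = m$, so $\{1,m\}$ and $\{2,m\}$ are edges. Thus $\IG_2$ is a path on three vertices, which is acyclic, so $g(\IG_2) = \infty$. This covers all cases and completes the argument.

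The step requiring the most care is the choice of the adjacent pair $\{a,b\}$ with $a \neq b$ and $ab > k$ in the case $k \geq 3$: one must make sure such distinct $a, b \in \mathbb{N}_k$ actually exist. The pair $(2, k)$ works precisely because $k \geq 3$ guarantees both $2 \neq k$ and $2k > k$; for $k = 2$ this breaks ($2 = k$), which is consistent with $\IG_2$ having no triangle. So the threshold $k \geq 3$ in the statement is exactly where a triangle first appears, and the only real obstacle is verifying this small combinatorial fact rather than any deeper structural argument.
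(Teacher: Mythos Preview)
Your proof is correct and follows essentially the same approach as the paper: exhibit a triangle through $m$ for $k \geq 3$, then check directly that $\IG_1$ and $\IG_2$ are acyclic. The only difference is cosmetic---the paper uses the pair $\{k-1,k\}$ (with $k(k-1) > k$ for $k \geq 3$) in place of your pair $\{2,k\}$ to form the triangle with $m$.
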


\begin{proof}
Let $k \geq 3$. Then, the vertices $k$ and $k-1$ are connected because $k(k-1) > k$ in $\mathbb{N}$, and so, $k \odot (k-1) = m$ in $I_k$. Therefore, the triangle with the vertices $k-1$, $k$, and $m$ is a subgraph of $\IG_k$, and so, $g(\IG_k) = 3$. It is evident that the graph $\IG_1$ has no cycle. Also, in the graph $\IG_2$, the vertices $1$ and $2$ are not connected and again $\IG_2$ has no cycle. This completes the proof.
\end{proof}

Recall that a clique of a graph is a set of mutually adjacent vertices, and that the maximum size of a clique of a graph $G$, the clique number of $G$, is denoted by $\omega(G)$ \cite[p. 296]{BondyMurty2008}.

\begin{proposition}\label{cliqueIGleq4}
The clique number of $\IG_1$, $\IG_2$, $\IG_3$, and $\IG_4$ is $2$, $2$, $3$, and $4$,  respectively.
\end{proposition}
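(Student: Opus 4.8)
The plan is to reduce the computation of $\omega(\IG_k)$ to a small arithmetic question about subsets of $\mathbb{N}_k$. The key observation is that the vertex $m$ is adjacent to every other vertex of $\IG_k$: for each $a \in \mathbb{N}_k$ we have $a \neq m$ and $a \odot m = m$, so $\{a, m\}$ is always an edge. Hence, if $C$ is a clique of $\IG_k$ with $m \notin C$, then $C \cup \{m\}$ is a strictly larger clique, so some maximum clique contains $m$, and
\[
\omega(\IG_k) = 1 + \max\bigl\{\, |C| : C \subseteq \mathbb{N}_k,\ a \odot b = m \text{ for all distinct } a, b \in C \,\bigr\}.
\]
Since for $a, b \in \mathbb{N}_k$ the equation $a \odot b = m$ holds precisely when $ab > k$ in $\mathbb{N}$, the task becomes to find the largest $C \subseteq \{1, 2, \dots, k\}$ all of whose pairwise products exceed $k$.

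Next I would record the elementary fact that $1$ cannot belong to such a $C$ alongside any second element, since $1 \cdot a = a \leq k$ for every $a \in \mathbb{N}_k$; hence for $|C| \geq 2$ one may assume $C \subseteq \{2, \dots, k\}$. The four required values then follow by inspection. For $k = 1$ the only candidate is $C = \{1\}$, giving $\omega(\IG_1) = 2$. For $k = 2$ the only possible two-element set is $\{1, 2\}$ and $1 \cdot 2 = 2 \leq 2$, so no admissible $C$ has two elements and $\omega(\IG_2) = 2$. For $k = 3$, among the pairs in $\{1,2,3\}$ only $2 \cdot 3 = 6 > 3$, so $C = \{2, 3\}$ is admissible and maximal, whence $\omega(\IG_3) = 3$. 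For $k = 4$ the products $2 \cdot 3 = 6$, $2 \cdot 4 = 8$, and $3 \cdot 4 = 12$ all exceed $4$, so $C = \{2, 3, 4\}$ is admissible, and a four-element $C$ is impossible since $1$ cannot be adjoined; hence $\omega(\IG_4) = 4$.

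I do not expect a genuine obstacle here: once the universal-vertex reduction is in place, each of the four values is a finite verification over the products $ab$ with $a, b \leq k$. The only point demanding slight care is stating the reduction correctly --- that a maximum clique may always be enlarged to contain $m$, and that the cliques of $\IG_k$ avoiding $m$ are exactly the subsets $C \subseteq \mathbb{N}_k$ with $ab > k$ for all distinct $a, b \in C$. One may additionally cross-check the four answers against the general lower bound $\lfloor k/2 \rfloor + 1$, which they indeed respect.
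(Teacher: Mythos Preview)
Your proof is correct and follows essentially the same approach as the paper: both arguments identify, for each $k\le 4$, a clique of the stated size containing $m$ and then rule out larger cliques by noting that $1$ is adjacent to no element of $\mathbb{N}_k$. Your version simply factors out the universal-vertex observation about $m$ once at the start rather than using it implicitly in each case, which is a mild organizational improvement but not a different route.
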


\begin{proof} We compute the clique number of $\IG_k$ for $k \leq 4$ as follows:
	
\begin{enumerate}
	\item In $\IG_1$, the vertices $1$ and $m$ are adjacent and its clique number of $\IG_1$ is $2$.
	
	\item In $\IG_2$, the vertex $m$ is adjacent to the vertices $1$ and $2$ but the vertices $1$ and $2$ are not adjacent. So, the clique number of $\IG_2$ is again $2$.
	
	\item In $\IG_3$, the vertices $m$, $2$, and $3$ are mutually adjacent while $1$ is not connected to $2$ and $3$. It follows that the clique number of $\IG_3$ is $3$.
	
	\item In $\IG_4$, the vertices $m$, $2$, $3$, and $4$ are mutually adjacent while $1$ is not connected to $2$, $3$, and $4$. This means that the clique number of $\IG_4$ is $4$.
\end{enumerate}

This completes the proof.
\end{proof}

\begin{theorem}\label{cliqueIG}
The clique number of the Indigenous graph $\IG_k$ is at least $\left\lfloor \frac{k}{2} \right\rfloor + 1$, for any positive integer $k$.
\end{theorem}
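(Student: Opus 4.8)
The plan is to exhibit an explicit clique of $\IG_k$ of size at least $\left\lfloor \frac{k}{2} \right\rfloor + 1$. The natural candidate is the ``upper half'' of $\mathbb{N}_k$ together with the symbol $m$: writing $t = \left\lfloor \frac{k}{2} \right\rfloor$, I would take
\[
C = \{t+1,\, t+2,\, \ldots,\, k\} \cup \{m\} = \{a \in \mathbb{N}_k : 2a > k\} \cup \{m\}.
\]
Counting its elements gives $|C| = (k - t) + 1 = \left\lceil \frac{k}{2} \right\rceil + 1 \geq \left\lfloor \frac{k}{2} \right\rfloor + 1$, so the whole content is to verify that $C$ is a clique of $\IG_k$, i.e., that any two distinct vertices of $C$ are adjacent.

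I would argue in two cases. If one of the two vertices is $m$, then for $a \in \mathbb{N}_k$ we have $a \neq m$ and $a \odot m = m$ by Definition \ref{Indigenouspresemiringdef}, so $\{a, m\}$ is an edge; thus $m$ is adjacent to everything in $C$. If both vertices lie in $\{t+1, \ldots, k\}$, say they are $a < b$, then $a \geq t+1$ and $b \geq t+2$, so
\[
ab \geq (t+1)(t+2) = t^2 + 3t + 2 \geq 2t + 2 > 2t + 1 \geq k,
\]
using $t \geq 0$ for the middle inequality and $t = \left\lfloor \frac{k}{2} \right\rfloor$ for the last one. Hence $a \odot b = m$, so $\{a,b\}$ is an edge. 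This shows $C$ is a clique, whence $\omega(\IG_k) \geq |C| \geq \left\lfloor \frac{k}{2} \right\rfloor + 1$.

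I do not expect a genuine obstacle here: once the right candidate clique is chosen, everything reduces to the single elementary estimate $(t+1)(t+2) > k$, and the only mild care needed is in the floor/ceiling bookkeeping so that the argument is uniform in $k$ (the small values $k \leq 4$ may alternatively just be read off from Proposition \ref{cliqueIGleq4}). A secondary remark is that the bound obtained is far from tight — in fact $C$ already has $\left\lceil \frac{k}{2} \right\rceil + 1$ elements, and choosing the smallest $s$ with $s(s+1) > k$ and taking $\{s, s+1, \ldots, k, m\}$ yields a clique of size roughly $k - \sqrt{k}$ — but the displayed chain of inequalities is exactly what is needed to obtain the stated weaker estimate.
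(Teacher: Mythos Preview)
Your proof is correct and follows essentially the same strategy as the paper: exhibit the explicit clique consisting of $m$ together with the ``upper half'' of $\mathbb{N}_k$ and verify the single product estimate. Your execution is in fact a bit more streamlined---the chain $(t+1)(t+2) \geq 2t+2 > 2t+1 \geq k$ works uniformly for all $k \geq 1$, whereas the paper starts its clique one step lower at $\lceil k/2 \rceil$, splits into even/odd cases to check $\lceil k/2 \rceil^{2} > k$, and handles $k \leq 4$ separately via Proposition~\ref{cliqueIGleq4}.
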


\begin{proof}
In view of Proposition \ref{cliqueIGleq4}, the inequality $\omega(\IG_k) \geq \left\lfloor \frac{k}{2} \right\rfloor + 1$ holds for all $k \leq 4$. Now, let $k \geq 5$. We distinguish two cases.

Case I. If $k = 2 \alpha$ is an even number with $\alpha \geq 3$, then \[\left(k - \left\lfloor \frac{k}{2} \right\rfloor \right)^2 - k = \alpha^2 - 2\alpha > 0.\]

Case II. If $k = 2\alpha + 1$ is an odd number with $\alpha \geq 2$, then \[\left(k - \left\lfloor \frac{k}{2} \right\rfloor \right)^2 - k = (\alpha+1)^2 - (2\alpha + 1) = \alpha^2 > 0.\] Therefore, all vertices \[\left(k - \left\lfloor \frac{k}{2} \right\rfloor\right), \left(k - \left\lfloor \frac{k}{2} \right\rfloor\right) + 1, \dots, k, m\] are mutually adjacent to each other in $\IG_k$. Thus $\omega(\IG_k) \geq \left\lfloor \frac{k}{2} \right\rfloor + 1$ for any positive integer number $k$ and the proof is complete.
\end{proof}

Recall that the smallest number of colors needed to color the vertices of a graph $G$ such that no two adjacent vertices share the same color is called the chromatic number of $G$, denoted by $\chi(G)$ \cite[p. 358]{BondyMurty2008}. It is clear that $\chi(G) \geq \omega(G)$ \cite[p. 359]{BondyMurty2008}. So, we have the following corollary:

\begin{corollary}\label{chromaticIG}
The chromatic number of the Indigenous graph $\IG_k$ is at least $\left\lfloor \frac{k}{2} \right\rfloor + 1$, for any positive integer $k$.
\end{corollary}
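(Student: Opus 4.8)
The plan is to derive this immediately from Theorem~\ref{cliqueIG} together with the elementary bound $\chi(G) \geq \omega(G)$ recalled just above the statement. Concretely, in any proper vertex coloring the vertices of a clique must receive pairwise distinct colors, so a clique of size $t$ forces at least $t$ colors; taking $t = \omega(\IG_k)$ and then substituting the lower bound $\omega(\IG_k) \geq \left\lfloor \frac{k}{2} \right\rfloor + 1$ supplied by Theorem~\ref{cliqueIG} yields $\chi(\IG_k) \geq \left\lfloor \frac{k}{2} \right\rfloor + 1$.

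If one wishes to avoid even the abstract invocation of $\omega$, the same conclusion can be reached directly by reusing the explicit clique exhibited in the proof of Theorem~\ref{cliqueIG}: the vertices $k - \left\lfloor \frac{k}{2} \right\rfloor,\ k - \left\lfloor \frac{k}{2} \right\rfloor + 1,\ \dots,\ k,\ m$ are mutually adjacent in $\IG_k$ (the products of distinct integers in that range exceed $k$ by the case analysis there, and every integer times $m$ is $m$), and there are $\left\lfloor \frac{k}{2} \right\rfloor + 1$ of them; hence any proper coloring must assign them all different colors.

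I expect no real obstacle: beyond Theorem~\ref{cliqueIG} the argument uses only the definition of a proper coloring and the trivial monotonicity $\chi \geq \omega$, so in the final text this should amount to at most a sentence or two, perhaps with a parenthetical reminder of why $\chi(G)\geq\omega(G)$.
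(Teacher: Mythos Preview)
Your proposal is correct and matches the paper's approach exactly: the corollary is stated without a separate proof, being deduced immediately from Theorem~\ref{cliqueIG} via the standard inequality $\chi(G)\geq\omega(G)$ recalled just before it. Your optional direct argument using the explicit clique is also fine but unnecessary for the paper's purposes.
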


\section{Indigenous semirings and their ideals}\label{sec:indigenousideals}

\begin{proposition}\label{presemiringembedhemiring}
	Any presemiring can be embedded into a hemiring.	
\end{proposition}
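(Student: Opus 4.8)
The plan is to realize the desired hemiring by formally adjoining an absorbing zero to the given presemiring. Recall that a hemiring is a presemiring $(R,+,\cdot)$ in which $(R,+)$ is a commutative monoid and its additive neutral element $0$ is absorbing for the multiplication; equivalently, a hemiring is a ``semiring without a multiplicative identity''. So, given an arbitrary presemiring $R$, I would choose a symbol $0 \notin R$, put $H = R \cup \{0\}$, and extend the operations of $R$ to $H$ by declaring $r + 0 = 0 + r = r$, $0 + 0 = 0$, $r \cdot 0 = 0 \cdot r = 0$, and $0 \cdot 0 = 0$ for every $r \in R$, while leaving the operations unchanged on $R \times R$.

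The bulk of the argument is then a routine check that $(H,+,\cdot)$ is a hemiring. First, $(H,+)$ is a commutative monoid: commutativity on $R \times R$ is inherited from $R$, and the newly defined sums are symmetric by fiat; associativity need only be checked on triples in which at least one entry is $0$, and in each such case both bracketings reduce to the sum of the remaining entries (or to $0$); and $0$ is neutral by construction. Second, $(H,\cdot)$ is a semigroup: associativity need only be verified on triples containing a $0$, and since any product in $H$ with a factor equal to $0$ collapses to $0$, both bracketings give $0$. Third, the two distributive laws hold on $R \times R \times R$ because $R$ is a presemiring; if $a = 0$ then both sides of $a\cdot(b+c) = a\cdot b + a\cdot c$ equal $0$ (and symmetrically for right distributivity); and if $b = 0$ (say) then $a\cdot(b+c) = a\cdot c = 0 + a\cdot c = a\cdot b + a\cdot c$ because $0$ is additively neutral. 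Finally, $0$ is absorbing for $\cdot$ by definition, so $H$ is a hemiring.

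It then remains to observe that the inclusion $\iota\colon R \hookrightarrow H$ is an embedding of presemirings. It is visibly injective, and it is a homomorphism because for $r,s \in R$ the sum $\iota(r)+\iota(s)$ and product $\iota(r)\cdot\iota(s)$ are computed in $H$ exactly as $r+s$ and $r\cdot s$ are in $R$, by our decision to leave the operations on $R \times R$ untouched. Hence $R$ embeds into the hemiring $H$, which proves the proposition.

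Since the construction is nothing more than a bounded case analysis, there is no genuine obstacle. The one point deserving a moment's care is that $\iota$ stays a homomorphism even when $R$ already happens to contain an additive neutral element or a multiplicatively absorbing element $z$: adjoining the fresh symbol $0$ alters no sum or product among elements of $R$, so no identity of $R$ is destroyed. The element $z$ simply ceases to be absorbing in the larger structure $H$, which is harmless, since $H$ only needs the new element $0$ to be absorbing.
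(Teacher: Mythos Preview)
Your proof is correct and follows essentially the same approach as the paper: adjoin a fresh symbol $0$ to the presemiring, extend the operations so that $0$ is additively neutral and multiplicatively absorbing, and embed via the inclusion map. Your verification of the hemiring axioms is in fact more thorough than the paper's, which simply asserts that the check is easy, and your closing remark about a pre-existing neutral or absorbing element in $R$ addresses a subtlety the paper glosses over.
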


\begin{proof}
	Let $E$ be a presemiring and suppose that $0 \notin E$. Set $E^{\prime} = E \cup \{0\}$ and define $+^{\prime}$ and ${\cdot}^{\prime}$ on $E^{\prime}$ as follows:
	
	\begin{itemize}
		
		\item $a +^{\prime} b = a + b$ for all $a,b\in E$ and $a +^{\prime} 0 = 0 +^{\prime} a = a$ for all $a\in E^{\prime}$.
		\item $a {\cdot}^{\prime} b = a \cdot b$ for all $a,b\in E$ and $a {\cdot}^{\prime} 0 = 0 {\cdot}^{\prime} a = 0$ for all $a\in E^{\prime}$.
		
	\end{itemize}
	
	One can easily check that $(E^{\prime}, +^{\prime}, {\cdot}^{\prime})$ is also a presemiring and the element $0$ is an identity element for addition and an absorbing element for multiplication. Now, define $\varphi : E \rightarrow E^{\prime}$ by $\varphi(x) = x$. It is clear that $\varphi$ is a presemiring monomorphism. This completes the proof.
\end{proof}

\begin{corollary}\label{Indigenoussemiringcor}
The Indigenous presemiring $I_k$ can be embedded into the semiring $I_k \cup \{0\}$, for each $k \in \mathbb{N}$. 
\end{corollary}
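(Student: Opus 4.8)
The plan is simply to specialize Proposition~\ref{presemiringembedhemiring} to the presemiring $E = I_k$. The first thing to check is the hypothesis $0 \notin E$: by Definition~\ref{Indigenouspresemiringdef}, $I_k = \mathbb{N}_k \cup \{m\}$, where $\mathbb{N}_k$ consists of positive integers and $m$ is a fresh symbol, so we may and do take $0 \notin I_k$. Hence the construction in the proof of Proposition~\ref{presemiringembedhemiring} goes through verbatim: we set $E' = I_k \cup \{0\}$ and extend $\oplus$ and $\odot$ to $E'$ by declaring $0$ to be neutral for addition and absorbing for multiplication, obtaining a presemiring $(I_k \cup \{0\}, \oplus, \odot)$ together with the inclusion monomorphism $\varphi \colon I_k \hookrightarrow I_k \cup \{0\}$.

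It then remains to upgrade ``presemiring with an additive neutral element and a multiplicative zero'' to ``semiring'', i.e., to produce a multiplicative identity distinct from $0$. Here I would invoke Proposition~\ref{Indigenoussemiringpro}, according to which $I_k$ is a \emph{unital} presemiring with identity $1$. Since $0$ is absorbing for the extended multiplication, the identities $1 \odot a = a \odot 1 = a$ continue to hold for every $a \in I_k \cup \{0\}$ (the new case $a = 0$ being exactly absorption), and $1 \neq 0$ because $1 \in I_k$ while $0 \notin I_k$. Therefore $I_k \cup \{0\}$ is a semiring and $\varphi$ is the claimed embedding.

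I do not expect any genuine obstacle here; the statement is a one-line corollary once the disjointness $0 \notin I_k$ is noted and the preservation of the multiplicative unit under the adjunction of $0$ is recorded.
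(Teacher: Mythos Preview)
Your proposal is correct and follows essentially the same route as the paper, which derives the corollary directly from Proposition~\ref{presemiringembedhemiring} without further argument. Your explicit verification that the multiplicative identity $1$ of $I_k$ survives the adjunction of $0$ (so that $I_k \cup \{0\}$ is a genuine semiring rather than merely a hemiring) is a detail the paper leaves implicit.
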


\begin{definition}\label{Indigenoussemiringdef}
For any positive integer $k$, we call the semiring $I_k \cup \{0\}$ given in Corollary \ref{Indigenoussemiringcor}, the Indigenous semiring and denote it by $S_k$.
\end{definition}

\begin{theorem}\label{GeneralpropertiesofIndigenoussemirings}
Let $k$ be a positive integer and $S_k$ the Indigenous semiring. Then, the following statements hold:

\begin{enumerate}
	\item\label{IndigenoustotallyorderedIA} The semiring $S_k$ is a totally ordered information algebra with the smallest element $0$ and the largest element $m$. However, $S_k$ is not a semidomain.
	
	\item\label{LocalizationIndigenous} Let $k > 1$ and $U$ be a multiplicatively closed set in $S_k$ having a positive integer $a > 1$ and $0 \notin U$. Then, the localization $U^{-1} S_k$ of $S_k$ at $U$ is isomorphic to the Boolean semiring $\mathbb{B} = \{0,1\}$.
	
	\item\label{Indigenouslocal} $(S_k, S_k \setminus\{1\})$ is a local semiring.
	
	\item\label{misinanynonzeroideal} The set $\mathfrak{s}_k = \{0,m\}$ is the smallest nonzero ideal of the Indigenous semiring $S_k$. In particular, any nonzero ideal of $S_k$ possesses $m$.
	
	\item\label{Subtractiveidealsindigenoussemirings} The semiring $S_k$ is austere, i.e., the only subtractive ideals of $S_k$ are $\{0\}$ and $S_k$ (see p. 71 in \cite{Golan1999(b)}).
	
	\item\label{PrimeidealsIndigenoussemiring} The only prime ideals of the Indigenous semiring $S_k$ are $\{0\}$ and $S_k \setminus \{1\}$.
	
	\item\label{Nonzeroprincipalprime} $S_k$ has a nonzero principal prime ideal if and only if $k\leq 2$.
	
	\item\label{ZariskitopologyIndigenoussemiring} The Zariski topology of the Indigenous semiring $S_k$ is the Sierpi\'{n}ski space.
	
	\item\label{RadicalidealsIndigenoussemirings} If $I$ is a nonzero proper ideal of $S_k$, then $\sqrt{I} = S_k \setminus \{1\}$. In particular, the only radical ideals of $S_k$ are $\{0\}$, $S_k \setminus \{1\}$, and $S_k$.
	
	\item\label{SemiringidealsIndigenousinformationalgebra} $\Id(S_k)$ is a partially ordered information algebra and if $I$ is a proper nonzero ideal of $S_k$, then \[\{0\} \subseteq \mathfrak{s}_k \subseteq I \subseteq \mathfrak{m}_k \subseteq S_k.\]
	
	\item\label{Absorbingelementnonzeroideals} $(\Id(S_k)\setminus\{\mathbf{0}\}, \cdot)$ is a monoid with the absorbing element $\mathfrak{s}_k$, where by $\mathbf{0}$, we mean the zero ideal $\{0\}$ of $S_k$. 
	
	\item\label{Monoidnonzeroidealsnilpotent} If $n$ is a positive integer number such that $2^n > k$, then for any nonzero proper ideals $\{I_i\}_{i=1}^{n}$ of $S_k$, we have $\prod_{i=1}^{n} I_i= \mathfrak{s}_k$; in other words, the multiplicative monoid $M = \Id(S_k)\setminus\{\mathbf{0}\}$ is nilpotent (i.e., there is a positive integer number $n$ with $M^n = \{\mathfrak{s}_k\}$). 
\end{enumerate}
\end{theorem}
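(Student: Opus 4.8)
The plan is to derive all twelve items from a short list of elementary facts about the operations of $S_k$ together with Theorems~\ref{Zariskitopologyentiresemiringwithtwoprimes} and \ref{Semiringidealsinformationalgebra}; throughout I would write $\mathfrak m_k=S_k\setminus\{1\}$ and $\mathfrak s_k=\{0,m\}$. First I would note that $0<1<2<\dots<k<m$ is a total order for which $\oplus$ and $\odot$ are monotone (they are the truncations of $+$ and $\cdot$ on $\mathbb N$, with $m$ a top element absorbing both operations); combined with ``$a\oplus b=0$ forces $a=b=0$'' and ``a product of two nonzero elements is a positive integer or $m$'', this yields that $S_k$ is a totally ordered information algebra with least element $0$ and greatest element $m$, while $m\odot 1=m=m\odot m$ shows it is not a semidomain --- item (1). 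A direct check shows $\mathfrak m_k$ is an ideal, and since a proper ideal cannot contain $1$, every proper ideal lies in $\mathfrak m_k$ and $(S_k,\mathfrak m_k)$ is local --- item (3). Likewise $\mathfrak s_k$ is an ideal, and if $I\ne\{0\}$ then choosing $0\ne x\in I$ gives $m=x\odot m\in I$, so $\mathfrak s_k\subseteq I$; this is item (4), and with the previous remark it gives $\{0\}\subseteq\mathfrak s_k\subseteq I\subseteq\mathfrak m_k\subseteq S_k$ for every nonzero proper $I$. For austerity I would observe that $\{0\}$ is subtractive by zerosumfreeness and $S_k$ trivially so, while a subtractive ideal $I\ne\{0\}$ contains $m$, and then $m+1=m\in I$ together with $m\in I$ forces $1\in I$, i.e.\ $I=S_k$ --- item (5). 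Finally $\Id(S_k)$ is a semiring whose identity is the unit ideal, a product of two nonzero ideals is nonzero because $S_k$ is entire, so $M=\Id(S_k)\setminus\{\mathbf 0\}$ is a submonoid of $(\Id(S_k),\cdot)$, and $I\cdot\mathfrak s_k$ is generated by the products $x\odot 0=0$ and $x\odot m=m$ with $x\in I$, hence equals $\mathfrak s_k$; this is item (11), and in particular $\mathfrak s_k$ is idempotent.

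\textbf{Primes, localization, Zariski topology (items (2), (6), (8), (9)).} The one auxiliary lemma I would isolate is that, for an integer $a\ge 2$, the $N$-fold product $a\odot\cdots\odot a$ equals $m$ once $2^N>k$. Granting it: $\{0\}$ is prime since $S_k$ is entire, and $\mathfrak m_k$ is prime since $ab=1$ forces $a=b=1$; if $P$ is prime with $\{0\}\ne P\ne S_k$ then $\mathfrak s_k\subseteq P\subseteq\mathfrak m_k$, and each integer $a\in\{2,\dots,k\}$ satisfies $a\odot\cdots\odot a=m\in P$, so $a\in P$ by iterating the primeness criterion; hence $P=\mathfrak m_k$ --- item (6), after which item (8) is exactly Theorem~\ref{Zariskitopologyentiresemiringwithtwoprimes}. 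For item (9) I would use $\sqrt I=\bigcap\{P\in\Spec(S_k):P\supseteq I\}$: a nonzero proper $I$ lies in $\mathfrak m_k$ but not in $\{0\}$, so $\sqrt I=\mathfrak m_k$, and the radical ideals are exactly $\{0\},\mathfrak m_k,S_k$. For item (2), the hypothesis puts an integer $a\ge 2$ into $U$, so the auxiliary lemma gives $m\in U$; then $0/1\ne 1/1$ because $0\notin U$, while $m/1=1/1$ (witness $u=m$), and since $m$ absorbs multiplication every fraction with nonzero numerator collapses to $1/1$, so $U^{-1}S_k=\{0/1,1/1\}$ with $1\oplus1=1$, i.e.\ $U^{-1}S_k\cong\mathbb B$.

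\textbf{Ideals of $S_k$ and nilpotency (items (7), (10), (12)).} Item (10) follows from Theorem~\ref{Semiringidealsinformationalgebra} (entireness of $S_k$) plus the chain established above. For item (7), item (6) shows a nonzero prime must be $\mathfrak m_k$; I would check directly that $\mathfrak m_1=(m)$ and $\mathfrak m_2=(2)$, so a nonzero principal prime exists when $k\le 2$, and for $k\ge 3$ show $\mathfrak m_k$ is not principal: $(m)=\mathfrak s_k\ne\mathfrak m_k$, and if $\mathfrak m_k=(a)$ with $a\ge 2$ then, since every element of $(a)$ lying in $\mathbb N_k$ is a multiple of $a$, the relation $2\in\mathfrak m_k=(a)$ forces $a=2$, whereas $3\in\mathfrak m_k$ but $3\notin(2)$ for $k\ge 3$ --- a contradiction. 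For item (12), each nonzero proper $I_i$ satisfies $\mathfrak s_k\subseteq I_i\subseteq\mathfrak m_k$, so monotonicity of ideal multiplication and idempotence of $\mathfrak s_k$ give $\mathfrak s_k=\mathfrak s_k^{\,n}\subseteq\prod_{i=1}^n I_i\subseteq\mathfrak m_k^{\,n}$; and when $2^n>k$ every product $x_1\odot\cdots\odot x_n$ with $x_i\in\mathfrak m_k$ lies in $\mathfrak s_k$ (clear if some $x_i\in\{0,m\}$; otherwise each $x_i\ge 2$ and the product exceeds $2^n>k$, so equals $m$), hence $\mathfrak m_k^{\,n}\subseteq\mathfrak s_k$, and combining this with the preceding inclusions gives $\prod_{i=1}^n I_i=\mathfrak s_k$ --- the asserted nilpotency of $M$.

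\textbf{Main obstacle.} Apart from these, everything reduces to bookkeeping with the order $0<1<\dots<m$ and the absorbing behaviour of $m$. The two steps needing genuine care are the localization computation in item (2), where one must unwind the semiring localization relation to see that all nonzero fractions coincide, and the non-principality of $\mathfrak m_k$ for $k\ge 3$ in item (7), which rests on identifying $(a)\cap\mathbb N_k$ with the set of multiples of $a$ not exceeding $k$.
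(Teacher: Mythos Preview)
Your proposal is correct and, for items (1)--(6) and (8)--(11), essentially matches the paper's argument step for step: the same order $0<1<\dots<k<m$, the same observation $m\odot 1=m\odot m$, the same use of $x\odot m=m$ to put $\mathfrak s_k$ inside every nonzero ideal, the same $m+1=m$ for austerity, the same ``some power of $a\ge 2$ equals $m$'' device for (2) and (6), and the same appeals to Theorems~\ref{Zariskitopologyentiresemiringwithtwoprimes} and \ref{Semiringidealsinformationalgebra}.

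Two items are handled differently from the paper, and your routes are arguably cleaner. For item~(7), the paper does a case split on the generator $a\in\{2,\dots,k,m\}$ and, for each $a$, exhibits a witness that $(a)$ fails to be prime (e.g.\ a power of $3$ lands in $(2)$ but $3\notin(2)$). You instead invoke item~(6) to reduce the question to whether $\mathfrak m_k$ itself is principal, and then show it is not via the identification $(a)\cap\mathbb N_k=\{a,2a,\dots\}\cap\mathbb N_k$; this avoids the case analysis and makes the logical structure clearer, at the cost of needing that description of $(a)\cap\mathbb N_k$ (which is immediate since $(a)=S_ka$). For item~(12), the paper computes $\prod a_i$ directly for $a_i\in I_i$; your sandwich $\mathfrak s_k=\mathfrak s_k^{\,n}\subseteq\prod I_i\subseteq\mathfrak m_k^{\,n}\subseteq\mathfrak s_k$ is tidier and makes the lower bound explicit, whereas the paper leaves the inclusion $\mathfrak s_k\subseteq\prod I_i$ implicit.
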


\begin{proof}
(\ref{IndigenoustotallyorderedIA}): If $a$ and $b$ are nonzero elements of $S_k$, then their multiplication (addition) is either a positive integer number less than $k+1$ or $m$. So, $S_k$ is entire and zerosumfree. If we set \[0 < 1 < \cdots < k < m,\] then it is easy to see that $a \leq b$ implies $a+c \leq b+c$ and $ac \leq bc$, for all $a$, $b$, and $c$ in $S_k$. This means that $S_k$ is a totally ordered information algebra with the smallest element $0$ and the largest element $m$. Note that while $m \neq 1$, we always have \[m \cdot 1 = m = m \cdot m\] which means that $S_k$ is not a semidomain.

(\ref{LocalizationIndigenous}): Since $a \in U$ and $U$ is multiplicatively closed, $a^n \in U$ for each natural number $n$. It is clear that for sufficiently large enough $n$, $a^n > k$ in $\mathbb{N}$, and so, $a^n$ which is $m$ in $S_k$ is an element of $U$. Now, let $a / u$ be a nonzero element in $U^{-1} S_k$. Note that since $m \in U$ and $m / m = 1/1$ is the multiplicative identity of the semiring $U^{-1} S_k$, we have \[a/u = (a/u)(m/m) = (am)/(mm) = m/m\] showing that $U^{-1} S_k$ has only two elements $0/1$ and $1/1$. On the other hand, \[(1/1) + (1/1) = (m/m) + (m/m) = (m+m)/m = m/m = 1/1.\] This shows that $U^{-1} S_k$ is isomorphic to the Boolean semiring $\mathbb{B}$.

(\ref{Indigenouslocal}): Let $a \neq 1$ and $b \neq 1$. It is easy to see that $a+b \neq 1$. Also, since $xy = 1$ if and only if $x = 1$ and $y = 1$ in $S_k$, we see that if $s \in S_k$ is arbitrary and $a \neq 1$, then $sa \neq 1$. This means that $\mathfrak{m}_k = S_k \setminus \{1\}$ is an ideal of $S_k$. However, there is no other ideals strictly between $\mathfrak{m}_k$ and $S_k$. So, $\mathfrak{m}_k$ is a maximal ideal of $S_k$. On the other hand, an ideal $I$ of a semiring is proper if and only if $1 \notin I$. Therefore, any proper ideal of $S_k$ is a subset of $\mathfrak{m}_k$. Thus $\mathfrak{m}_k$ is the only maximal ideal of $S_k$.

(\ref{misinanynonzeroideal}): It is easy to see that $\mathfrak{s}_k = \{0,m\}$ is an ideal of $S_k$. Now, let $I$ be a nonzero ideal of $S_k$. If $s$ is a nonzero element of $I$, then $m = ms$ must be in $I$.

(\ref{Subtractiveidealsindigenoussemirings}): Let $I$ be a nonzero proper ideal of $S_k$. Then by the statement (\ref{misinanynonzeroideal}), $m \in I$ while $1 \notin I$. However, $m+1 = m \in I$. This means that $I$ is not subtractive. It is evident that $\{0\}$ and $S_k$ are subtractive.

(\ref{PrimeidealsIndigenoussemiring}): By the statement (\ref{IndigenoustotallyorderedIA}), $\{0\}$ in prime. In view of the statement (\ref{Indigenouslocal}) and Corollary 7.13 in \cite{Golan1999(b)}, $S_k \setminus \{1\}$ is also prime. Now, let $P$ be a nonzero prime ideal of $S_k$. If $a \in S_k \setminus \{0,1\}$, then either $2 \leq a \leq k$ or $a = m$. In any case, there is a positive integer $n$ such that $a^n = m$. On the other hand, by the statement (\ref{misinanynonzeroideal}), $m$ is an element of each nonzero ideal of $S_k$. Consequently, $a^n \in P$. Since $P$ is prime, we have $a \in P$. Therefore, $P = S_k \setminus \{1\}$.

(\ref{Nonzeroprincipalprime}): In view of the statement (\ref{PrimeidealsIndigenoussemiring}), in $S_1$, the principal ideal $(m) = S_1 \setminus \{0\}$ is prime, and in $S_2$, $(2) = S_2 \setminus \{1\}$ is also prime. Now, let $k \geq 3$. The principal ideal $(2)$ is not prime because a suitable power of $3$ is $m \in (2)$ but $3$ is not an element of $(2)$. Now, let $p > 2$ be a prime number in $\mathbb{N}_k$. The principal ideal $(p)$ is not prime because a suitable power of $2$ is $m \in (p)$ while $2$ is not in $(p)$. If $c$ is a composite number in $\mathbb{N}_k$, then the principal ideal $(c)$ is clearly not prime. Also, $(m) = \{0,m\}$ is not prime because a suitable power of $2$ is $m \in (m)$ while $2$ is not in $(m)$.

(\ref{ZariskitopologyIndigenoussemiring}): By the statement (\ref{PrimeidealsIndigenoussemiring}), the only prime ideals of $S_k$ are $\{0\}$ and $\mathfrak{m}_k = S_k \setminus \{1\}$. Therefore, by Theorem \ref{Zariskitopologyentiresemiringwithtwoprimes}, the Zariski topology of $S_k$ is the Sierpi\'{n}ski space.

(\ref{RadicalidealsIndigenoussemirings}): For any ideal $I$ of $S_k$, the radical $\sqrt{I}$ of $I$ is the intersection of prime ideals of $S_k$ containing $I$ (cf. Theorem 3.2 in \cite{Nasehpour2018P}). Now, if $I$ is nonzero, then $\sqrt{I} = S_k \setminus \{1\}$ because by the statement (\ref{PrimeidealsIndigenoussemiring}), $S_k \setminus \{1\}$ is the only prime ideal of $S_k$ containing $I$. Therefore, a nonzero proper ideal of $S_k$ is radical if and only if $I = S_k \setminus \{1\}$. Now, since by the statement (\ref{IndigenoustotallyorderedIA}), $S_k$ is entire, $\{0\}$ is a radical ideal. It is evident that $S_k$ is a radical ideal.

(\ref{SemiringidealsIndigenousinformationalgebra}): By Theorem \ref{Semiringidealsinformationalgebra}, $\Id(S_k)$ is a partially ordered information algebra. By the statement (\ref{misinanynonzeroideal}), $\mathfrak{s}_k$ is the smallest nonzero ideal. By the statement (\ref{Indigenouslocal}), $\mathfrak{m}_k = S_k \setminus \{1\}$ is the largest proper ideal.

(\ref{Absorbingelementnonzeroideals}): Since $\Id(S_k)$ is an entire semiring, $(\Id(S_k)\setminus\{\mathbf{0}\}, \cdot)$ is a monoid. Now, let $I$ be a nonzero ideal. We need to prove that $I \cdot \mathfrak{s}_k = \mathfrak{s}_k$. By the statement (\ref{misinanynonzeroideal}), $\mathfrak{s}_k$ is the smallest nonzero ideal of $S_k$. On the other hand, \[I \cdot \mathfrak{s}_k \subseteq I \cap \mathfrak{s}_k = \mathfrak{s}_k.\]

(\ref{Monoidnonzeroidealsnilpotent}): Let $n$ be a positive integer number with $2^n > k$. Let $\{I_i\}_{i=1}^{n}$ be arbitrary nonzero proper ideals of $S_k$. By the statement (\ref{misinanynonzeroideal}), $m \in I_i$ while $1 \notin I_i$. Consider $a_i \in I_i$. Observe that if at least one of the $a_i \in I_i$ is zero, then $\prod_{i=1}^{n} a_i = 0$. Also, if all of the $a_i$s are nonzero and at least one of them is $m$, then $\prod_{i=1}^{n} a_i = m$. Now, let $a_i \notin \{0,m\}$. This means that $2 \leq a_i$, for each $i$, and so, in $\mathbb{N}$, we have \[\prod_{i=1}^{n} a_i \geq 2^n > k.\] This means that $\prod_{i=1}^{n} a_i = m$ in $S_k$ and the proof is complete.
\end{proof}

\begin{theorem}
Let $S_k$ be the Indigenous semiring and $M$ a commutative monoid. Then, the following statements hold:

\begin{enumerate}
	\item The monoid semiring $S_k[M]$ is an information algebra.
	
	\item If $M$ is a totally ordered commutative monoid, then the function \[\deg: (S_k[M],+,\cdot,0,1) \rightarrow (M_{\infty},\max,+,-\infty,0)\] is a semiring morphism. 
\end{enumerate}
\end{theorem}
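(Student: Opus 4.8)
The plan is to lean on two properties of $S_k$ recorded in item (\ref{IndigenoustotallyorderedIA}) of Theorem \ref{GeneralpropertiesofIndigenoussemirings}: $S_k$ is entire and zerosumfree. Throughout I view an element of $S_k[M]$ as a finitely supported function $f\colon M\to S_k$, written as a finite formal sum $\sum_g f(g)\,X^g$; addition is pointwise, multiplication is convolution, and the identities are $0$ (the zero function) and $1=X^{e}$, where $e$ is the neutral element of $M$.

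For (1), that $S_k[M]$ is zerosumfree is immediate: if $f+h=0$ then $f(g)+h(g)=0$ in $S_k$ for every $g$, and zerosumfreeness of $S_k$ forces $f(g)=h(g)=0$, so $f=h=0$. For zero-divisor freeness the key device is the augmentation map $\sigma\colon S_k[M]\to S_k$, $\sigma(f)=\sum_{g\in M}f(g)$, a well-defined finite sum. A short computation using commutativity and distributivity in $S_k$ shows $\sigma$ is a semiring homomorphism; in particular $\sigma(fh)=\sigma(f)\sigma(h)$ because $\sum_g\sum_{uv=g}f(u)h(v)=(\sum_u f(u))(\sum_v h(v))$. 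Crucially, $\sigma(f)=0$ implies $f=0$, since a sum in a zerosumfree semiring vanishes only when every summand does (an easy induction from the two-term case). Hence if $f,h\neq0$ then $\sigma(f),\sigma(h)\neq0$, so $\sigma(fh)=\sigma(f)\sigma(h)\neq0$ as $S_k$ is entire, whence $fh\neq0$. Thus $S_k[M]$ is an information algebra. Note that no hypothesis on $M$ beyond being a commutative monoid is used: zerosumfreeness of the coefficient semiring removes the cancellation that normally forces one to require $M$ to be cancellative and torsion-free.

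For (2), assume the total order on $M$ is compatible with its operation ($a\le b\Rightarrow a+c\le b+c$), which is exactly what makes $(M_\infty,\max,+,-\infty,0)$ a semiring. Put $\deg(0)=-\infty$ and, for $f\neq0$, $\deg(f)=\max\{g\in M:f(g)\neq0\}$, which exists since the support is finite and nonempty; then $\deg(0)=-\infty$ and $\deg(1)=\deg(X^{e})=e$, the multiplicative neutral element of the target. For additivity: if $\deg f\neq\deg h$ one reads off $\deg(f+h)=\max(\deg f,\deg h)$ directly, while if $\deg f=\deg h=g$ then $(f+h)(g)=f(g)+h(g)\neq0$ by zerosumfreeness of $S_k$, giving the same conclusion; the cases with $f$ or $h$ zero are trivial. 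For multiplicativity: if $f$ or $h$ is $0$ both sides are $-\infty$; otherwise every element of the support of $fh$ has the form $u+v$ with $f(u),h(v)\neq0$, hence is $\le\deg f+\deg h$ by compatibility of the order, so $\deg f+\deg h$ bounds the support of $fh$ from above; and the coefficient of $X^{\deg f+\deg h}$ in $fh$ contains the summand $f(\deg f)h(\deg h)$, nonzero because $S_k$ is entire, so the whole coefficient is nonzero by zerosumfreeness. Therefore $\deg(fh)=\deg f+\deg h$, and $\deg$ is a semiring morphism.

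Every calculation here is routine, so there is no hard obstacle; the one point worth flagging — and the only place where the argument could go wrong if one is careless — is that \emph{both} entireness and zerosumfreeness of $S_k$ are indispensable and play complementary roles: entireness makes the would-be leading coefficient of a product nonzero, while zerosumfreeness prevents that leading term from being annihilated by the other contributions to the same coefficient (and, in a sum, prevents the two leading terms from cancelling). This is precisely what lets the statement hold for an arbitrary commutative monoid in (1) and an arbitrary totally ordered commutative monoid in (2), with no cancellativity hypotheses.
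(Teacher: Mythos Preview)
Your proof is correct. The paper's own argument is essentially a two-line citation: it invokes Theorem~4.10 of \cite{Nasehpour2025} for part~(1) and Corollary~4.18 of \cite{Nasehpour2025} for part~(2), after noting via Theorem~\ref{GeneralpropertiesofIndigenoussemirings} that $S_k$ is an information algebra. Your argument is therefore genuinely different in that it is fully self-contained. The augmentation map $\sigma\colon S_k[M]\to S_k$ in your treatment of~(1) is a particularly clean device: it converts entireness of $S_k[M]$ into entireness of $S_k$ with no hypothesis on $M$ at all, whereas the usual ``leading term'' argument would require a compatible total order (or at least cancellativity) on $M$. The cited external theorem presumably proves the same general fact, but your route makes transparent exactly where each hypothesis (entire, zerosumfree) enters. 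For part~(2) your direct verification of the degree map is the standard one and is doubtless what underlies the cited corollary; the paper simply outsources it.
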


\begin{proof}
(1): By Theorem 4.10 in \cite{Nasehpour2025}, if $M$ is a commutative monoid and $S$ an information algebra, then the monoid semiring $S[M]$ is also an information algebra. Thus in view of Theorem \ref{GeneralpropertiesofIndigenoussemirings}, $S_k[M]$ is an information algebra.

(2): In view of Theorem \ref{GeneralpropertiesofIndigenoussemirings}, this is a special case of Corollary 4.18 in \cite{Nasehpour2025}. This completes the proof. 
\end{proof}

\section{Distinguished elements of polynomials and formal power series over the Indigenous semirings}\label{sec:distinguishedelements}

In this section, $S_k$ denotes the Indigenous semiring defined in Definition \ref{Indigenoussemiringdef}. Since $S_k$ is an entire semiring, $S_k[X]$ \cite[Corollary 2.4]{Nasehpour2021} and $S_k[[X]]$ \cite[Lemma 43]{Nasehpour2016} are also entire semirings. Therefore, $S_k$, $S_k[X]$, and $S_k[[X]]$ have no nontrivial zero-divisors (and nilpotent elements). Now, we proceed to discuss their units and idempotent elements.

\begin{proposition}\label{UnitsIndigenoussemirings}
The only unit element of $S_k$, $S_k[X]$, and $S_k[[X]]$ is $1$.
\end{proposition}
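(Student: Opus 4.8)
The plan is to settle the three cases in order of increasing generality, reducing the polynomial and power series cases to a single coefficient-by-coefficient computation over $S_k$. I would first record the multiplicative fact about $S_k$ itself: if $x,y \in S_k$ satisfy $x \odot y = 1$, then $x = y = 1$. This follows by a short case analysis on the description $S_k = \{0,1,\dots,k\}\cup\{m\}$: if $0 \in \{x,y\}$ the product is $0$; if $m \in \{x,y\}$ the product is $0$ or $m$; and if $x,y \in \mathbb{N}_k$ with $\min(x,y)\geq 2$ then $xy \geq 2$ in $\mathbb{N}$, hence $x\odot y \in \{xy, m\}$ is still $\neq 1$. So the only possibility is $x=y=1$, i.e.\ the unit group of $S_k$ is $\{1\}$. (This is essentially the observation already used in the proof of Theorem~\ref{GeneralpropertiesofIndigenoussemirings}(\ref{Indigenouslocal}).)

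Next I would observe that, under the inclusions $S_k \subseteq S_k[X] \subseteq S_k[[X]]$, a unit of $S_k$ is a unit of $S_k[X]$ and a unit of $S_k[X]$ is a unit of $S_k[[X]]$ (with the same two-sided inverse, which lies in the relevant sub-semiring). Hence it suffices to prove that $1$ is the only unit of $S_k[[X]]$, and then the statements for $S_k$ and $S_k[X]$ follow at once. So let $f = \sum_{i\geq 0} a_i X^i$ and $g = \sum_{j\geq 0} b_j X^j$ satisfy $fg = 1$ in $S_k[[X]]$. Comparing constant terms gives $a_0 \odot b_0 = 1$, so by the previous paragraph $a_0 = b_0 = 1$. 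I would then argue by induction on $n \geq 1$ that $a_n = b_n = 0$: assuming $a_i = b_i = 0$ for $1 \leq i \leq n-1$, the coefficient of $X^n$ in $fg$ is $\bigoplus_{i=0}^{n} a_i \odot b_{n-i}$, and every middle term with $1 \leq i \leq n-1$ vanishes while the two outer terms equal $a_0\odot b_n = b_n$ and $a_n \odot b_0 = a_n$; thus the coefficient collapses to $a_n \oplus b_n$, which must be $0$. Since $S_k$ is zerosumfree (it is an information algebra by Theorem~\ref{GeneralpropertiesofIndigenoussemirings}(\ref{IndigenoustotallyorderedIA})), this forces $a_n = b_n = 0$. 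Therefore $f = g = 1$, completing all three cases.

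I do not expect a genuine obstacle here; the one pitfall to flag is that over a semiring a power series need not be a unit merely because its constant term is (there is no subtraction with which to build an inverse), so the ring-theoretic criterion must not be invoked — rather, it is precisely zerosumfreeness that makes the recursion above force all higher coefficients to $0$. I would also note in passing that the same argument yields, with no extra work, that in any zerosumfree entire semiring $S$ the unit groups of $S$, $S[X]$, and $S[[X]]$ all coincide (for $S[X]$ one may alternatively compare top-degree coefficients, using only that $S$ is entire), of which Proposition~\ref{UnitsIndigenoussemirings} is the special case $S = S_k$.
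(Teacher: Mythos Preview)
Your proof is correct and rests on the same key fact as the paper's (zerosumfreeness of $S_k$) for the power-series case. The organization differs: the paper treats the three semirings separately, handling $S_k[X]$ via the degree function and entirety ($\deg(fg)=\deg f+\deg g$ forces $\deg f=\deg g=0$), whereas you subsume both $S_k$ and $S_k[X]$ into the $S_k[[X]]$ case through the chain of subsemiring inclusions, which is tidier. For $S_k[[X]]$ the two arguments are essentially the same, though your induction on $n$ does a bit more than needed: zerosumfreeness already forces every summand of $\bigoplus_{i=0}^{n} a_i b_{n-i}=0$ to vanish, so $a_0 b_n=b_n=0$ and $a_n b_0=a_n=0$ follow directly without an inductive hypothesis. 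Your closing remark that the same argument yields $U(S)=U(S[X])=U(S[[X]])$ for any zerosumfree entire semiring $S$ is a worthwhile generalization not stated in the paper.
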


\begin{proof}
	Obviously in $S_k$, $ab = 1$, implies that $a = b = 1$. So, the only unit element of $S_k$ is $1$. Let $f,g \in S_k[X]$ with $fg = 1$. Since $S_k$ is an entire semiring, we obtain that $\deg(f) = \deg(g) = 0$. Therefore, $f = g = 1$.
	
	Now, let $f, g \in S_k[[X]]$ with $fg = 1$. Suppose that $f = \sum_{i=0}^{+\infty} a_i X^i$ and $g = \sum_{j=0}^{+\infty} b_j X^j$.  Clearly, $a_0 = b_0 = 1$. Since $S_k$ is zerosumfree, from $fg = 1$, we obtain that $a_i = b_i = 0$, for all $i \geq 1$. This completes the proof.  
\end{proof}

\begin{proposition}\label{IdempotentsIndigenoussemirings1}
The only idempotent elements of $S_k$ and $S_k[X]$ are $0$, $1$, and $m$.
\end{proposition}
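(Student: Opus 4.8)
The plan is to reduce the polynomial case to the constant case by a degree argument, after a direct verification inside $S_k$.

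First I would pin down the idempotents of $S_k$ by inspection. The elements $0$, $1$, and $m$ are idempotent, since $0 \odot 0 = 0$, $1 \odot 1 = 1$, and $m \odot m = m$ (the last by the clause of Definition~\ref{Indigenouspresemiringdef} covering $m$). Conversely, suppose $a$ is an idempotent of $S_k$ with $a \notin \{0,1,m\}$, so that $a$ is an integer with $2 \le a \le k$. By the definition of $\odot$ there are just two cases: if $a^2 \le k$, then $a \odot a = a^2$, and idempotency gives $a^2 = a$, forcing $a = 1$, a contradiction; if $a^2 > k$, then $a \odot a = m$, which is $\ne a$ because $a \le k$. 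Hence the idempotents of $S_k$ are exactly $0$, $1$, and $m$.

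Next I would handle $S_k[X]$. Since $S_k$ is entire (Theorem~\ref{GeneralpropertiesofIndigenoussemirings}, statement~(\ref{IndigenoustotallyorderedIA})), so is $S_k[X]$, and the leading coefficient of a product of two nonzero polynomials is the product of their leading coefficients, hence nonzero; therefore $\deg(fg) = \deg(f) + \deg(g)$ for all nonzero $f,g \in S_k[X]$. Now if $f \in S_k[X]$ is idempotent and nonzero, then $\deg(f) = \deg(f^2) = 2\deg(f)$, so $\deg(f) = 0$; thus $f$ is a constant, i.e.\ an idempotent of $S_k$, and by the first step $f \in \{0,1,m\}$. Since these three constants are plainly idempotent in $S_k[X]$ as well, the characterization follows.

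I do not anticipate a real obstacle here: both steps are elementary, and the only slightly delicate point is the additivity of the degree on $S_k[X]$, which rests on $S_k$ being entire. I would also note that this degree trick is precisely what breaks down for $S_k[[X]]$, since a nonzero formal power series need not have a top-degree term; this is why the power series case is treated separately in Theorem~\ref{IdempotentsIndigenoussemirings2}.
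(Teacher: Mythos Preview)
Your proposal is correct and follows essentially the same route as the paper: a direct case split on $a$ for $S_k$, then a degree argument in $S_k[X]$ using that $S_k$ is entire to force any idempotent polynomial to be constant. The only differences are cosmetic (you spell out $\deg(fg)=\deg(f)+\deg(g)$ explicitly and verify the forward direction that $0,1,m$ are idempotent), so there is nothing substantive to compare.
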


\begin{proof}
In $S_k$, if $a$ is different from $0$, $1$, and $m$, then $2 \leq a \leq k$. If $a^2 \leq k$, then $a^2 \neq a$. If $a^2 > k$, then $a^2 = m$ which means that again $a^2 \neq a$. Thus the only idempotent elements of $S_k$ are $0$, $1$, and $m$.

Now, let $f \in S_k[X]$. Since $S_k$ is an entire semiring, then \[\deg(f^2) > \deg(f)\] except the case that $f$ is a constant polynomial. Therefore, $f^2 = f$ if and only if $f = a$, where $a \in S_k$. This means that $f$ is idempotent in $S_k[X]$ if and only if $f$ is either $0$ or $1$ or $m$. This completes the proof.
\end{proof}

\begin{theorem}\label{IdempotentsIndigenoussemirings2}
An element $f$ in $S_k[[X]]$ is idempotent if and only if either $f = 0$ or $f = 1$ or $f = m$ or \[f = a_0 + \sum_{i=1}^{+\infty} m X^{s_i},\] where $a_0 = 1,m$ and the set $\{s_i\}_{i=1}^{+\infty}$ is a subsemigroup of $(\mathbb{N},+)$.
\end{theorem}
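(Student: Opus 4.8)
The plan is to write $f=\sum_{i\ge 0}a_iX^i$ and extract information from the coefficientwise identity $f^2=f$, treating the ``only if'' direction as the substantive one while the ``if'' direction is a direct verification. Comparing constant terms gives $a_0^2=a_0$, so by Proposition~\ref{IdempotentsIndigenoussemirings1} we have $a_0\in\{0,1,m\}$. If $a_0=0$ and $f\ne 0$, let $d\ge 1$ be least with $a_d\ne 0$; then in the coefficient of $X^d$ in $f^2$, every term $a_ia_{d-i}$ has either $i=0$ or $d-i=0$ (so one factor is $a_0=0$) or else $1\le i\le d-1$ (so $a_i=0$ by minimality of $d$). Hence that coefficient is $0$, contradicting $a_d\ne 0$, and so $a_0=0$ forces $f=0$.

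The heart of the matter is the case $a_0\in\{1,m\}$, for which I would prove by strong induction on $n\ge 1$ that $a_n\in\{0,m\}$. Write the coefficient of $X^n$ in $f^2$ as $a_0a_n+a_na_0+\sum_{1\le i\le n-1}a_ia_{n-i}$; by the inductive hypothesis each term $a_ia_{n-i}$ with $1\le i\le n-1$ lies in $\{0,m\}$. If $a_0=m$ and $a_n\ne 0$ then $a_0a_n=m$, so the whole coefficient equals $m$, forcing $a_n=m$. If $a_0=1$ and $a_n$ were an element of $\mathbb N_k$, then $a_0a_n+a_na_0=a_n+a_n$, which is strictly larger than $a_n$ in the total order $0<1<\cdots<k<m$ of $S_k$ (Theorem~\ref{GeneralpropertiesofIndigenoussemirings}(\ref{IndigenoustotallyorderedIA})); since addition in $S_k$ is order-preserving, the coefficient of $X^n$ in $f^2$ would then exceed $a_n$, contradicting $f^2=f$. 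Thus $a_n\in\{0,m\}$ in all cases. Setting $\Lambda=\{i\ge 1:a_i=m\}$ gives $f=a_0+\sum_{i\in\Lambda}mX^i$; if $\Lambda=\varnothing$ this is $f=1$ or $f=m$, and otherwise $\Lambda$ is a subsemigroup of $(\mathbb N,+)$: for $s,t\in\Lambda$ the term $a_sa_t=m$ occurs in the coefficient of $X^{s+t}$ in $f^2$, so that coefficient is $m$ and $s+t\in\Lambda$. Since a nonempty subsemigroup of $(\mathbb N,+)$ is necessarily infinite (it contains $s,2s,3s,\dots$), $\Lambda$ can be enumerated as $\{s_i\}_{i=1}^{+\infty}$, which is the stated form.

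For the converse, the elements $0$, $1$, $m$ are idempotent by Proposition~\ref{IdempotentsIndigenoussemirings1}, so it remains to check that $f=a_0+\sum_{i\in\Lambda}mX^i$ with $a_0\in\{1,m\}$ and $\Lambda$ a subsemigroup of $(\mathbb N,+)$ satisfies $f^2=f$. Every coefficient of $f$ lies in $\{0,1,m\}$ and only $a_0$ can equal $1$, so for $n\ge 1$ each term $a_ia_{n-i}$ of the coefficient of $X^n$ in $f^2$ lies in $\{0,m\}$; that coefficient equals $m$ precisely when some such term equals $m$, i.e. precisely when $n\in\Lambda$ or $n$ is a sum of two elements of $\Lambda$, which by closure of $\Lambda$ reduces to $n\in\Lambda$, i.e. to $a_n=m$. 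Together with $a_0^2=a_0$ for the constant term, this gives $f^2=f$.

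I expect the main obstacle to be the inductive step when $a_0\in\{1,m\}$: one must carry along the already-determined middle coefficients and then exploit the specific arithmetic of $S_k$ --- namely, that iterated addition can fix only $0$ and $m$, whereas any element of $\mathbb N_k$ grows strictly --- in order to exclude coefficients lying in $\mathbb N_k$. Once this is established, the subsemigroup condition and the converse follow readily from the absorbing behaviour of $m$ under multiplication.
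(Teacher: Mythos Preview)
Your proof is correct and follows essentially the same route as the paper: compare coefficients of $f^2=f$, split according to $a_0\in\{0,1,m\}$, show the higher coefficients lie in $\{0,m\}$, and then identify the support with a subsemigroup of $(\mathbb N,+)$. The only notable difference is that you frame the step ``$a_n\in\{0,m\}$'' as a strong induction on $n$, whereas the paper argues directly (for any $n$ with $a_n\in\mathbb N_k$ the contribution $a_0a_n+a_na_0$ alone already forces the $X^n$-coefficient of $f^2$ to exceed $a_n$ in the total order, regardless of the remaining terms); your induction hypothesis is never really used, but it does no harm. Your treatment of the converse and of the infinitude of $\Lambda$ is more explicit than the paper's.
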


\begin{proof}
Let $f = \sum_{i=0}^{+\infty} a_i X^i$ be idempotent. It follows that $a^2_0 = a_0$, and so, by using Proposition \ref{IdempotentsIndigenoussemirings1}, we can distinguish three cases:

\begin{enumerate}
	\item The case $a_0 = 0$. If $a_0 = 0$, then $f = \sum_{i=1}^{+\infty} a_i X^i$. Now, if $a_i \neq 0$, for some $i > 0$, then obviously $f^2 \neq f$. Therefore, $f = 0$.
	\item The case $a_0 = 1$. Our claim is that $a_i \in \{0,m\}$, for any $i > 0$. This is because if $a_i \neq 0,m$, for some $i > 0$, then the coefficient of $X^i$ in $f^2$ is at least $2a_i$ in $\mathbb{N}$ which is never $a_i$ in $S_k$. Now, consider \[f = 1 + mX^{s_1} + mX^{s_2} + \cdots.\] Our claim is that $f$ is idempotent if and only if the set $E = \{s_i\}_{i=1}^{+\infty}$ is a subsemigroup of $(\mathbb{N},+)$. For the direct implication, we need to show that $s_i + s_j \in E$, for all $s_i \in E$ and $s_j \in E$. Consider the monomials $mX^{s_i}$ and $mX^{s_j}$ in $f$. Therefore, $mX^{s_i + s_j}$ is a monomial in $f^2$. Since $f^2 = f$, $s_i + s_j$ needs to be one of the exponents of the monomials of $f$ which means that $E$ is a subsemigroup of $\mathbb{N}$. For the converse implication, observe that an easy calculation shows that if $\{s_i\}_{i=1}^{+\infty}$ is a subsemigroup of $\mathbb{N}$, then $f^2 = f$. 
	
	\item The case $a_0 = m$. Our claim is that in this case also $a_i \in \{0,m\}$, for any $i > 0$. This is because if $a_i \neq 0,m$, for some $i > 0$, then the coefficient of $X^i$ in $f^2$ is \[a_0 a_i + \dots + a_ia_0 = m a_i + \dots + a_i m = m\] which is never the same as $a_i$, i.e., the coefficient $X^i$ in $f$. Now, consider \[f = m + mX^{s_1} + mX^{s_2} + \cdots.\] Similar to the second case, one can easily check that $f$ is idempotent if and only if the set $\{s_i\}_{i=1}^{+\infty}$ is a subsemigroup of $(\mathbb{N},+)$.
\end{enumerate}
This completes the proof.
\end{proof}

\begin{theorem}
Let $\alpha \neq 0$ and $\beta$ be elements of the Indigenous semiring $S_k$ and $X$ an indeterminate over $S_k$. Then, $f=\alpha X^2+\beta$ is irreducible if and only if one of the following cases happens:

\begin{enumerate}
	\item $\alpha$ and $\beta$ are in $\mathbb{N}_k$ and $\gcd(\alpha, \beta) = 1$.
	
	\item $\alpha = m$ and $\beta = 1$.
	
	\item $\alpha = 1$ and $\beta = m$.
\end{enumerate}

\end{theorem}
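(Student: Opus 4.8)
The plan is to reduce the whole question to the multiplicative structure of $S_k$ by means of the degree map. Since $S_k$ is entire, $\deg$ is a semiring morphism on $S_k[X]$, so $\deg(gh) = \deg(g) + \deg(h)$, and by Proposition \ref{UnitsIndigenoussemirings} the only unit of $S_k[X]$ is $1$; in particular $f$, having degree $2$, is a non-unit, and any factorization $f = gh$ into non-units (i.e.\ $g, h \neq 1$) must satisfy $\{\deg g, \deg h\} = \{1,1\}$ or $\{0,2\}$. I would first observe that the case $\beta = 0$ is irrelevant here: $\alpha X^2 = (\alpha X)\cdot X$ already displays $f$ as a product of two non-units, so $f$ is reducible, consistently with $\beta$ being forced to lie in $\mathbb{N}_k$ or to equal $m$ in each of the three listed cases. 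So assume from now on that $\beta \neq 0$.

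Second, I would eliminate the $\{1,1\}$ splitting. Writing $g = aX+b$ and $h = cX+d$ with $a,c \neq 0$, comparison of coefficients gives $ac = \alpha$, $ad + bc = 0$, and $bd = \beta$. Since $S_k$ is zerosumfree, $ad = bc = 0$; since $\beta \neq 0$ and $S_k$ is entire, $b,d \neq 0$, and then $ad = 0$ forces $a = 0$, contradicting $ac = \alpha \neq 0$. Hence, for $\beta \neq 0$, every factorization of $f$ into non-units has the form $f = c \cdot g$ with $c \in S_k$ a non-unit constant and $\deg g = 2$; as $\alpha \neq 0$ we get $c \neq 0$, so $c \in \{2,\dots,k,m\}$, and entireness of $S_k$ forces the middle coefficient of $g$ to vanish, so $g = \alpha' X^2 + \gamma'$ with $c\alpha' = \alpha$ and $c\gamma' = \beta$.

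Third, this turns the problem into the statement that $f$ is reducible if and only if there exist $c \in \{2,\dots,k,m\}$ and $\alpha',\gamma' \in S_k$ with $c\alpha' = \alpha$ and $c\gamma' = \beta$, which is a finite check against the table of $\odot$. The arithmetic facts I would isolate are: (i) if $\alpha \in \mathbb{N}_k$ then $c\alpha' = \alpha$ forces $c,\alpha' \in \mathbb{N}_k$ with $c \mid \alpha$ in $\mathbb{N}$ (no overflow to $m$, and neither factor can be $0$ or $m$); (ii) if $\alpha = m$ then $c\alpha' = m$ is solvable for every $c \in \{2,\dots,k,m\}$; and (iii) $c\gamma' = m$ is solvable for every $c \in \{2,\dots,k,m\}$. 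Combining these: for $\alpha,\beta \in \mathbb{N}_k$, a common factor $c \geq 2$ exists exactly when $\gcd(\alpha,\beta) > 1$, which gives item (1); for $\alpha = 1$ with $\beta \in \mathbb{N}_k$ or $\beta = m$, no admissible $c$ divides $\alpha = 1$, so $f$ is irreducible, while for $\alpha \in \{2,\dots,k\}$ and $\beta = m$ the choice $c = \alpha$ works, so $f$ is reducible — together these pin down item (3); and for $\alpha = m$, the choices $c = \beta$ (if $\beta \in \{2,\dots,k\}$) or $c = m$ (if $\beta = m$) make $f$ reducible, whereas for $\beta = 1$ the equation $c\gamma' = 1$ forces $c = 1$, a unit, so $f$ is irreducible, which is item (2).

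The hard part, such as it is, is bookkeeping rather than conceptual: one must be scrupulous about the overflow rule $a \odot b = m$ when passing between divisibility in $S_k$ and divisibility in $\mathbb{N}$, above all in the negative half of item (1), where one needs that $\gcd(\alpha,\beta) = 1$ rules out \emph{every} non-unit constant factor, $c = m$ included. The elimination of the degree-$(1,1)$ factorizations and the exhibition of the explicit reducible factorizations are both short once the degree morphism and zerosumfreeness are in hand.
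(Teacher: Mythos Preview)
Your argument is correct and follows the same overall architecture as the paper's proof: rule out the linear-times-linear factorization, reduce to factorizations $f=\gamma g$ with $\gamma\in S_k$ a non-unit constant, and then run a case analysis on whether $\alpha,\beta$ lie in $\mathbb{N}_k$ or equal $m$. The one substantive difference is that the paper dispatches the degree-$(1,1)$ case by invoking an external result (Proposition~6.2 of \cite{Nasehpour2025}) together with Theorem~\ref{GeneralpropertiesofIndigenoussemirings}, whereas you give a direct two-line argument from zerosumfreeness and entireness ($ad+bc=0\Rightarrow ad=bc=0$, forcing $a=0$). Your route is more self-contained; the paper's is shorter on the page but imports a black box. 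You also handle the $\beta=0$ case explicitly, which the paper's proof passes over in silence.
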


\begin{proof}
In view of Proposition 6.2 in \cite{Nasehpour2025} and Theorem \ref{GeneralpropertiesofIndigenoussemirings} in the current paper, $f$ cannot be factored into $g = aX+b$ and $h = cX+d$ in $S_k[X]$, where $a$ and $c$ are nonzero in $S_k$. Therefore, $f$ is reducible if and only if there is a nonzero nonunit $\gamma$ in $S_k$, i.e., $\gamma \neq 0,1$ such that $f = \gamma g$, for some $g = \alpha'X^2 + \beta' \in S_k[X]$ which means that \[\alpha = \gamma \alpha' \wedge \beta = \gamma \beta'.\] Note that if $\alpha = \beta = m$, then $f$ is reducible. Therefore, if $f$ is irreducible, then at least one of the coefficients of $f$ must be a natural number. Now, observe the following: 

\begin{enumerate}
	\item If $\alpha$ and $\beta$ are natural numbers, then $\gamma \neq m$, and $f$ is reducible if and only if $ \gamma \mid \gcd(\alpha,\beta)$, i.e., $\gcd(\alpha,\beta) > 1$.
	
	\item If $\alpha = m$, then $f$ is reducible if and only if $\beta > 1$ because $mX^2 + 1$ is irreducible and \[f = mX^2 + \beta = \beta (mX^2 + 1).\]
	
	\item If $\beta = m$, then $f$ is reducible if and only if $\alpha > 1$ because $X^2 + m$ is irreducible and \[f = \alpha X^2 + m = \alpha (X^2 + m).\] 
\end{enumerate}
This gives the characterization of all irreducible polynomials of the form $\alpha X^2+\beta$ and the proof is complete.
\end{proof}

\subsection*{Acknowledgments} The authors wish to thank the anonymous referees for their valuable comments and suggestions, which improved the presentation of this paper.

\bibliographystyle{plain}

\end{document}